\date{}
\newcommand{\Z}{{\mathbb Z}}
\newcommand{\R}{{\mathbb R}}
\newcommand{\C}{{\mathbb C}}
\newcommand{\D}{{\mathbb D}}
\newcommand{\E}{{\mathcal E}}
\newcommand{\atr}{{\mathrm{atr}}}
\newcommand{\Int}{{\mathrm{int}}}
\newcommand{\SL}{{\mathrm{SL}}}
\newcommand{\SO}{{\mathrm{SO}}}
\newcommand{\Hd}{{\mathrm{H}}}
\newtheorem{theorem}{Theorem}[section]
\newtheorem{lemma}[theorem]{Lemma}
\newtheorem{prop}[theorem]{Proposition}
\newtheorem{coro}[theorem]{Corollary}
\theoremstyle{definition}
\theoremstyle{definition}
\newtheorem*{defi}{Definition}
\newcommand{\tr}{\mathrm{tr} }
\begin{document}



\title{Spectral Homogeneity of Discrete One-Dimensional Limit-Periodic Operators}

\author{Jake Fillman\thanks{Corresponding author}  \thanks{J.\ F.\ was supported in part by NSF grant DMS--1067988.}}

\maketitle

\begin{center}
Department of Mathematics, Rice University

6100 Main Street, MS-136

Houston, TX 77005, USA

jdf3@rice.edu
\end{center}

\begin{abstract}
We prove that a dense subset of limit periodic operators have spectra which are homogeneous Cantor sets in the sense of Carleson. Moreover, by using work of Egorova, our examples have purely absolutely continuous spectrum. The construction is robust enough to extend the results to arbitrary $p$-adic hulls by using the dynamical formalism proposed by Avila. The approach uses Floquet theory to break up the spectra of periodic approximants in a carefully controlled manner to produce Cantor spectrum and to establish the lower bounds needed to prove homogeneity.
\end{abstract}

\section{Introduction}

We are interested in spectral characteristics of self-adjoint operators on $\ell^2(\Z)$ of the form
\begin{equation} \label{eq:so}
(H_V \psi)(n)
=
\psi(n-1) + \psi(n+1) + V(n) \psi(n), \, n \in \Z,
\end{equation}
where $V \in \ell^\infty(\Z)$ plays the role of an electrostatic potential. In particular, operators of the form \eqref{eq:so} model one-dimensional one-body tight binding Hamiltonians, and thus, they provide a rich class of toy models in quantum mechanics and spectral theory. If $V$ is a periodic sequence, then the spectral theory of $H_V$ is quite well-understood by way of Floquet-Bloch theory. Indeed, any spectral-theoretic object (the spectrum, the density of states, the spectral measures, etc.) can be described quite explicitly; an inspired reference for this subject is \cite[Chapter~5]{simszego}. As soon as $V$ departs from the class of periodic potentials, the spectral characteristics of $H_V$ become significantly more subtle and elusive.

In this paper, we focus on the class of \emph{limit-periodic} operators, that is, operators of the form \eqref{eq:so} for which  the potential can be written as an $\ell^\infty$-limit of periodic sequences; see \cite{avila, damanikgan1, damanikgan2, damanikgan3, gan2010, gankrueger}. A typical example of such a potential is furnished by
$$
V(n)
=
\sum_{j=1}^\infty 2^{-j^2} \cos\left( \frac{2\pi n}{j!}\right).
$$
More specifically, we are concerned with homogeneity of the spectra of limit-periodic operators. Loosely speaking, a homogeneous closed subset of $\R$ is one which has a uniform positive density in arbitrarily small neighborhoods of each of its points. The precise definition follows (compare \cite{carleson83}). 

\begin{defi}
We say that a closed set $K \subseteq \R$ is \emph{homogeneous} (in the sense of Carleson) if there exist $\tau, \, \delta_0 > 0$ such that
\begin{equation} \label{eq:homog:def}
|B_\delta(x) \cap K |
\geq
\tau \delta
\quad
\text{for every } 0 < \delta \leq \delta_0 \text{ and } x \in K,
\end{equation}
where $B_\delta(x) = (x - \delta, x + \delta)$ denotes the  $\delta$-neighborhood of $x$. If we want to emphasize the relative density of $K$, we will say that a compact set which satisfies \eqref{eq:homog:def} for some $\delta_0 > 0$ is $\tau$-\emph{homogeneous}.
\end{defi}

Homogeneity of closed subsets of $\R$ is important from the point of view of inverse spectral theory. In particular, if $K$ is a homogeneous compact set, then the space of Jacobi matrices which have spectrum $K$ and are reflectionless thereupon is known to consist of almost-periodic operators by a theorem of Sodin and Yuditskii \cite{SY97}; moreover, Poltoratski and Remling have proved that the spectral measures of such Jacobi matrices will be purely absolutely continuous \cite{poltrem09}. There are analogous results for the inverse spectral theory of continuum Schr\"odinger operators and CMV matrices in \cite{GY99, SY95} and \cite{GZ09}, respectively.
\newline

Generically, the spectra of limit-periodic operators are of zero Lebesgue measure and so cannot be homogeneous in this sense \cite[Corollary~1.2]{avila}. On the other hand, the spectrum corresponding to any periodic potential will be a finite union of closed, bounded intervals; such a set is clearly $1$-homogeneous. In order to examine the interplay between inverse and direct spectral perspectives, it is of interest to apply direct spectral methods to construct almost-periodic examples with more exotic spectra which are nonetheless homogeneous in the sense of Carleson and which have purely absolutely continuous spectrum. This goal has been pursued in the setting of continuum quasi-periodic potentials in the regime of small coupling  \cite{DGL}. We can accomplish this in the class of limit-periodic operators because they are approximated by periodic operators in the operator norm topology. It turns out that a careful perturbative argument proves that the set of potentials with homogeneous Cantor spectrum is dense in the space of limit-periodic potentials. Moreover, by using work of Egorova, we are able to control the spectral type and produce purely absolutely continuous spectrum \cite{egorova}.

In fact, we will prove a more general result. Since spectral homogeneity is of interest from the point of view of inverse spectral theory, the natural family of tri-diagonal operators with which one should work is that of \emph{Jacobi operators}, i.e., operators of the form $J=J_{a,b}:\ell^2(\Z) \to \ell^2(\Z)$, defined by
\begin{equation}\label{eq:jo:def}
(J\psi)(n)
=
a(n-1) \psi(n-1) + a(n) \psi(n+1) + b(n) \psi(n),
\, n \in \Z,
\end{equation}
where $a$ and $b$ are bounded sequences of real numbers; see \cite{teschljacobi}. We will also always assume that $a$ is positive and bounded away from zero. In this context, our main theorem takes the following form.

\begin{theorem} \label{t:jacobi:homog}
Fix a periodic sequence $a > 0$, let $\mathcal L$ denote the set of real-valued limit-periodic sequences, and denote by $\mathcal H_\tau^a$ the set of $b \in \mathcal L$ so that $\sigma(J_{a,b})$ is a $\tau$-homogeneous Cantor set and such that the spectrum of $J_{a,b}$ is purely absolutely continuous. Then $\mathcal H_\tau^a$ is dense in $\mathcal L$ for every $\tau < 1$.
\end{theorem}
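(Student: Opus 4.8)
The plan is to exploit the definition of $\mathcal L$ as the $\ell^\infty$-closure of the periodic sequences, so that the density of $\mathcal H_\tau^a$ reduces to approximating a single periodic $b^{(0)}$ to within a prescribed $\varepsilon>0$ by some $b\in\mathcal H_\tau^a$. I would build $b$ as the $\ell^\infty$-limit of a sequence of periodic potentials $b^{(0)},b^{(1)},b^{(2)},\dots$, where $b^{(n)}$ has period $p_n$ (with $p_{n+1}$ a multiple of $p_n$) and where the spectrum $S_n=\sigma(J_{a,b^{(n)}})$ is obtained from $S_{n-1}=\sigma(J_{a,b^{(n-1)}})$ by opening one small gap in the interior of each of its bands, so that each band is subdivided into two. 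Since $\norm{J_{a,b}-J_{a,b'}}\le\norm{b-b'}_\infty$ and spectra of self-adjoint operators are Hausdorff-continuous in the norm, the spectrum of the limiting operator will be the decreasing intersection $S_\infty=\bigcap_n S_n$, provided the construction keeps $S_{n+1}\subseteq S_n$. The three things to arrange are therefore: (i) that the gaps can actually be opened by small perturbations; (ii) that the resulting nested family produces a $\tau$-homogeneous Cantor set with $\tau$ as close to $1$ as we wish; and (iii) that the convergence is fast enough to invoke Egorova's theorem for pure absolute continuity.

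For (i) I would use Floquet theory. For a $p$-periodic Jacobi matrix the monodromy matrix $M_p(E)$ has determinant $1$, its trace $\Delta_p(E)=\tr M_p(E)$ is a degree-$p$ real polynomial, and $S=\Delta_p^{-1}([-2,2])$. On passing from period $p$ to $2p$ the spectrum is unchanged but the discriminant obeys the Chebyshev relation $\Delta_{2p}=\Delta_p^2-2$; hence at every point where $\Delta_p=0$ (the density-of-states midpoints of the bands) one has $\Delta_{2p}=-2$ with a tangency, i.e.\ a \emph{closed} gap. The key perturbative lemma I would prove is that an arbitrarily small change of the potential resolves each such tangency into a genuine open gap, with first-order control of the gap length and of the two resulting sub-band lengths in terms of the size of the perturbation (the relevant non-degeneracy being $\Delta_{2p}''\neq 0$ at the double root). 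Because $\Delta_p=0$ sits at the middle value between $\pm 2$, the split is balanced and the two daughter bands are comparable in length; choosing the perturbation small makes the gap-to-band ratio $\rho_n$ at generation $n$ as small as desired while keeping $S_{n+1}\subseteq S_n$.

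For (ii) I would record quantitative length bounds: if $L_n$ denotes a typical generation-$n$ band length, then $L_{n+1}\approx\tfrac12(1-\rho_n)L_n$, so $L_n\to 0$ and $S_\infty$ is perfect, totally disconnected, and nonempty, hence a Cantor set. Homogeneity is then a bookkeeping estimate: for $x\in S_\infty$ and $\delta$ comparable to $L_n$, the Lebesgue measure removed from $B_\delta(x)$ is controlled by a geometric sum over generations $m\ge n-1$ of (number of generation-$m$ gaps meeting $B_\delta(x)$)$\times$(their length), which collapses to a multiple of $\delta\sum_{m\ge n-1}\rho_m$. Thus $\abs{B_\delta(x)\cap S_\infty}\ge(1-C\sum_{m\ge n-1}\rho_m)\delta$, so by taking the $\rho_m$ summable with small sum we obtain \eqref{eq:homog:def} for any prescribed $\tau<1$; the band-edge points, where only one side contributes, give exactly the constraint $\tau\le 1$ and explain why $\tau<1$ is the right range.

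Finally, for (iii), Egorova's criterion requires the periodic approximants to converge at a rate tied to the growth of the periods $p_n$; since each step only demands a small perturbation, I would simply define $\eta_n=\norm{b^{(n+1)}-b^{(n)}}_\infty$ to be the minimum of the bound forcing $\rho_n$ small, the bound $\sum_n\eta_n<\varepsilon$ guaranteeing closeness to $b^{(0)}$, and Egorova's rate for the current period. The resulting $b=\lim_n b^{(n)}$ then lies in $\mathcal L$, is within $\varepsilon$ of $b^{(0)}$, has $\sigma(J_{a,b})=S_\infty$ a $\tau$-homogeneous Cantor set, and has purely absolutely continuous spectrum, which proves Theorem~\ref{t:jacobi:homog}. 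The main obstacle I anticipate is the \emph{uniform} quantitative Floquet control in step (i): one must open the correct gap in \emph{every} band at each generation, with length estimates whose implied constants do not degenerate as the periods blow up, so that the sum of error terms converges and $\tau$ can be pushed arbitrarily close to $1$; reconciling this uniformity with the competing smallness demands of homogeneity, closeness, and Egorova's rate is the delicate part.
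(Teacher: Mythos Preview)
Your overall architecture matches the paper's: approximate a generic periodic $b_0$ by a sequence of $(p_n,a)$-generic potentials $b_n$, open gaps at each stage with carefully controlled gap-to-band ratios, pass to the limit, and invoke Egorova for absolute continuity. The period-doubling $p_{n+1}=2p_n$ you describe is the $k_j\equiv 2$ special case of the paper's construction, and your bookkeeping for homogeneity (sum the removed measure over generations) is the same idea as the paper's estimate \eqref{eq:step:homog}.

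There is, however, one genuine gap. You build the argument around the hypothesis $S_{n+1}\subseteq S_n$, so that $S_\infty=\bigcap_n S_n$ and the homogeneity bookkeeping reduces to ``measure removed at each stage.'' But the perturbation you describe in (i) --- a small change of $b$ to resolve the tangency $\Delta_{2p}=-2$ --- does \emph{not} produce nested spectra: a generic $\ell^\infty$-perturbation of size $\eta$ moves \emph{all} band edges by $O(\eta)$, so the outer edges of the daughter bands typically leak outside the parent band, and the previously open gaps also wiggle. There is no evident mechanism in your scheme to force containment, and inverse-spectral constructions that prescribe the spectrum exactly are a different (and much heavier) tool. The paper sidesteps this entirely: it never assumes $\sigma_n\subseteq\sigma_{n-1}$. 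Instead it uses Hausdorff convergence of the spectra (Proposition~\ref{p:specdist}) together with upper semicontinuity of Lebesgue measure under Hausdorff limits (Proposition~\ref{p:lebmsr:semicont}), and the finite-stage estimate is written as
\[
|I_0\cap\sigma_n|\ \ge\ |I_0\cap\sigma_{j-1}|-\sum_{\ell=j}^{n}|I_0\cap(\sigma_{\ell-1}\setminus\sigma_\ell)|,
\]
which is valid without any nesting. The summands are then bounded via Lemma~\ref{l:breakpts}: the new gaps at stage $\ell$ lie in $\varepsilon_\ell$-neighborhoods of the proper $k_\ell$-break points of $J_{a,b_{\ell-1}}$, and the old gaps move by at most $\varepsilon_\ell$. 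This is also exactly the ``uniform quantitative Floquet control'' you flag as the delicate part: the paper packages it as the minimal break-point distance $t_{j-1}$ and the explicit smallness conditions \eqref{eq:fastconv}, which simultaneously force the gap-to-band ratios to be summable (yielding \eqref{eq:step:homog} for any $\tau<1$), force $\sum\varepsilon_j<\varepsilon$, and force Egorova's rate $\varepsilon_j<e^{-jp_{j+1}}$. A minor related point: the split at the IDS-midpoint is balanced in harmonic measure, not in Euclidean length, so ``comparable daughter bands'' is not literally what you get; what the argument actually needs (and what the paper uses) is only a lower bound $\lambda_j\ge t_{j-1}-2\varepsilon_j$ on the shortest band.
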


\noindent \textit{Remark.} By a Cantor set, we mean a totally disconnected compact set with no isolated points. In particular, each element of $\mathcal H_\tau^a$ is obviously aperiodic. Moreover, a Cantor set clearly cannot be 1-homogeneous, so Theorem~\ref{t:jacobi:homog} is optimal.
\newline

As an immediate corollary, we see that the set of limit-periodic Jacobi parameters which produce purely absolutely continuous spectrum supported on a homogeneous Cantor set is dense in the natural space of Jacobi parameters. More precisely, define
$$
\mathcal J_C
=
\left\{ (a,b) : C^{-1} \leq a(n) \leq C, \, -C \leq b(n) \leq C \, \text{ for all } n \in \Z \right\}
$$
for each $C > 0$, and endow $\mathcal J_C$ with the relative topology that it inherits as a subspace of $\ell^\infty(\Z) \times \ell^\infty(\Z)$. Let $\mathcal L_C \subseteq \mathcal J_C $ denote the set of Jacobi parameters which are limit-periodic (i.e.\ $a$ and $b$ are both limit-periodic sequences). One then has the following corollary of Theorem~\ref{t:jacobi:homog}.

\begin{coro} \label{c:jacobi:homdense}
For each $\tau < 1$, $\mathcal H_{\tau,C}$ is dense in $\mathcal L_C$ with respect to the $\ell^\infty$ topology, where $\mathcal H_{\tau,C}$ denotes the set of $(a,b) \in \mathcal L_{C}$ for which $\sigma(J_{a,b})$ is a $\tau$-homogeneous Cantor set  and $J_{a,b}$ has purely absolutely continuous spectrum.
\end{coro}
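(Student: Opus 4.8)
The plan is to deduce Corollary~\ref{c:jacobi:homdense} from Theorem~\ref{t:jacobi:homog} by a diagonal/approximation argument. The key observation is that Theorem~\ref{t:jacobi:homog} holds for \emph{every} fixed positive periodic sequence $a$, so to reach a target pair $(a_0, b_0) \in \mathcal L_C$ we should first approximate $a_0$ by a periodic sequence $a$, then invoke the theorem to approximate $b_0$ by a suitable $b$ producing homogeneous Cantor spectrum with the desired spectral type.

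The steps I would carry out are as follows. First, fix $(a_0, b_0) \in \mathcal L_C$ and $\varepsilon > 0$. Since $a_0$ is limit-periodic, choose a periodic sequence $a$ with $\norm{a - a_0}_\infty < \varepsilon$; because $a_0 \geq C^{-1} > 0$ and $a_0$ is bounded, we may arrange (by taking $\varepsilon$ small relative to $C^{-1}$) that $a$ is positive and bounded away from zero, and that $a$ takes values in $[C^{-1}, C]$ after a harmless adjustment of the constant. Second, apply Theorem~\ref{t:jacobi:homog} with this fixed $a$: the set $\mathcal H_\tau^a$ is dense in $\mathcal L$, so there exists a limit-periodic $b \in \mathcal H_\tau^a$ with $\norm{b - b_0}_\infty < \varepsilon$, and by construction $\sigma(J_{a,b})$ is a $\tau$-homogeneous Cantor set and $J_{a,b}$ has purely absolutely continuous spectrum. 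Third, observe that the pair $(a,b)$ is limit-periodic in both coordinates (a periodic sequence is trivially limit-periodic), and after enlarging $C$ to accommodate $\norm{b}_\infty$ one has $(a,b) \in \mathcal L_C$; thus $(a,b) \in \mathcal H_{\tau,C}$. Since $\norm{(a,b) - (a_0,b_0)}_{\ell^\infty \times \ell^\infty} < 2\varepsilon$, density follows.

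The one point requiring genuine care—and the main obstacle—is bookkeeping with the constant $C$. A priori, perturbing $b_0$ to the element $b$ furnished by the theorem could nudge $\norm{b}_\infty$ above $C$, and similarly the periodic approximant $a$ might escape $[C^{-1},C]$; so the membership $(a,b) \in \mathcal L_C$ must be justified rather than assumed. The clean fix is to note that one may instead prove density directly in $\mathcal L_C$ by keeping all perturbations within the open set $\{a > C^{-1} - \delta,\ \norm{b}_\infty < C + \delta\}$ and recalling that $\mathcal H_\tau^a$ being dense in $\mathcal L$ lets us choose $b$ arbitrarily close to $b_0 \in \mathcal L_C$; since $\mathcal L_C$ is the relevant relative topology, approximation within $\mathcal J_C$ suffices. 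Alternatively, and most cleanly, one simply observes that the statement is about density in $\mathcal L_C$ with the $\ell^\infty$ topology, and that the above two-step approximation produces points in $\mathcal H_{\tau,C}$ arbitrarily close to any prescribed point of $\mathcal L_C$, which is exactly the assertion. Everything else is routine once this constant tracking is handled, since the substantive spectral content—homogeneity, Cantor structure, and absolute continuity—is supplied verbatim by Theorem~\ref{t:jacobi:homog}.
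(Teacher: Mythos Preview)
Your approach is the natural one and presumably what the paper intends: the paper gives no explicit proof of this corollary, merely noting after the statements of the main theorems that ``Clearly, these theorems imply Corollaries~\ref{c:jacobi:homdense} and~\ref{c:lpspec:homog}.'' The two-step reduction (approximate $a_0$ by a positive periodic $a$, then apply Theorem~\ref{t:jacobi:homog} with that fixed $a$) is exactly right.

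That said, your handling of the constant $C$ needs tightening. You cannot ``enlarge $C$ to accommodate $\norm{b}_\infty$'': the constant $C$ is fixed in the statement, and membership in $\mathcal H_{\tau,C}$ requires $(a,b) \in \mathcal L_C \subseteq \mathcal J_C$ for that very $C$. Your first suggested fix (working in $\{a > C^{-1}-\delta,\ \norm{b}_\infty < C+\delta\}$) leaves $\mathcal J_C$, and the second is circular since it assumes the approximant already lies in $\mathcal H_{\tau,C}$. The clean resolution is to first nudge the target strictly into the interior of $\mathcal J_C$: given $(a_0,b_0) \in \mathcal L_C$, replace it by $\big((1-\eta)a_0 + \eta c,\ (1-\eta)b_0\big)$ for some constant $c \in (C^{-1},C)$ and small $\eta>0$; this remains limit-periodic, lies in the interior of $\mathcal J_C$, and is within $O(\eta)$ of the original. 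You then have a uniform margin in which to choose the periodic approximant $a$ (clipping to $[C^{-1},C]$ if necessary) and the element $b \in \mathcal H_\tau^a$ so that $(a,b) \in \mathcal J_C$, hence $(a,b) \in \mathcal H_{\tau,C}$. With this adjustment your argument is complete.
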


By taking $a \equiv 1$ in Theorem~\ref{t:jacobi:homog}, we obtain the first claimed result -- the set of limit-periodic Schr\"odinger operators with purely absolutely continuous spectrum supported on a Carleson-homogeneous Cantor set is $\ell^\infty$-dense in the space of all limit-periodic potentials.

\begin{coro}\label{c:lpspec:homog}
For each $\tau < 1$, let $\mathcal H_\tau^{\mathrm S} \subseteq \mathcal L$ be the set of $V \in \mathcal L$ such that $\sigma(H_V)$ is a $\tau$-homogeneous Cantor set and $H_V$ has purely absolutely continuous spectrum. Then $\mathcal H_\tau^{\mathrm S} $ is $\ell^\infty$-dense in $\mathcal L$ for every $\tau < 1$.
\end{coro}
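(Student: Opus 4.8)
The plan is to obtain Corollary~\ref{c:lpspec:homog} as the special case $a \equiv 1$ of Theorem~\ref{t:jacobi:homog}, so that all of the analytic work is carried by the theorem and only a matching of conventions remains. The first step is to record that the discrete Schr\"odinger operator $H_V$ from \eqref{eq:so} is exactly the Jacobi operator $J_{a,b}$ from \eqref{eq:jo:def} once one sets $a \equiv 1$ and $b = V$: substituting $a(n-1) = a(n) = 1$ into \eqref{eq:jo:def} returns \eqref{eq:so} term by term, so that $J_{1,V} = H_V$. The second step is to check that $a \equiv 1$ is an admissible diagonal sequence for the theorem, which it is, since the constant sequence $1$ is periodic (of every period), strictly positive, and trivially bounded away from zero.

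With these two observations, I would simply identify the relevant sets. Taking $a \equiv 1$ in Theorem~\ref{t:jacobi:homog}, the set $\mathcal H_\tau^a$ consists of those $b \in \mathcal L$ for which $\sigma(J_{1,b}) = \sigma(H_b)$ is a $\tau$-homogeneous Cantor set and $J_{1,b} = H_b$ has purely absolutely continuous spectrum. After the harmless renaming $b \mapsto V$, this is verbatim the set $\mathcal H_\tau^{\mathrm S}$ of the corollary; hence $\mathcal H_\tau^{\mathrm S} = \mathcal H_\tau^{a \equiv 1}$. Since $\mathcal L$ denotes the same space of real-valued limit-periodic sequences in both statements, the density of $\mathcal H_\tau^{a \equiv 1}$ in $\mathcal L$ for every $\tau < 1$, guaranteed by the theorem, is precisely the assertion of the corollary.

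I expect no genuine obstacle here: the entire content sits inside Theorem~\ref{t:jacobi:homog}, which I am assuming, and the passage to the corollary is a pure specialization. The only point deserving even a second's attention is the verification that the Schr\"odinger normalization $a \equiv 1$ lies in the class of periodic, positive, bounded-below sequences permitted by the theorem, and this is immediate.
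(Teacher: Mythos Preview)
Your proposal is correct and matches the paper's own argument: the paper explicitly states that Corollary~\ref{c:lpspec:homog} is obtained ``by taking $a \equiv 1$ in Theorem~\ref{t:jacobi:homog},'' and you carry out exactly this specialization with the appropriate identifications $J_{1,V} = H_V$ and $\mathcal H_\tau^{\mathrm S} = \mathcal H_\tau^{a\equiv 1}$.
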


If one considers inverse spectral theory of unitary operators on the circle rather than the inverse spectral theory of self-adjoint operators on the real line, one is naturally led to the class of CMV operators. Specifically, given a sequence $\alpha$ of complex numbers such that $\alpha(n) \in \D = \{ z \in \C : |z| < 1\}$ for every $n \in \Z$, the associated CMV operator $\E = \E_\alpha$ is defined by the matrix representation
$$
\E_\alpha
=
\begin{pmatrix}
\ddots & \ddots & \ddots &&&&&  \\
 & a(0) & b(1) & d(1) &&& & \\
& c(0) & a(1) & c(1) &&& & \\
&  & b(2) & a(2) & b(3) & d(3) & & \\
& & d(2) & c(2) & a(3) & c(3) &  &  \\
& &&& b(4) & a(4) & b(5) & \\
& &&& d(4) & c(4) & a(5) &   \\
& &&&& \ddots & \ddots &  \ddots
\end{pmatrix}
$$
with respect to the standard basis of $\ell^2(\Z)$, where
\begin{align*}
\rho(n) & = \sqrt{1-|\alpha(n)|^2} \\
a(n) & = -\overline{\alpha(n)} \alpha(n-1)\\
b(n) & = \overline{\alpha(n)} \rho(n-1)\\
c(n) & = -\rho(n) \alpha(n-1)\\
d(n) & = \rho(n) \rho(n-1).
\end{align*}
See \cite{simopuc1, simopuc2} for more detailed information. By straightforward modifications to the proof of Theorem~\ref{t:jacobi:homog}, one obtains the following analog in the realm of CMV operators. Notice that we do not claim to produce purely absolutely continuous spectrum in this setting. It is likely true that our construction gives purely absolutely continuous spectrum in the CMV setting, but the paper of Egorova on which we rely to control the spectral type focuses on the Jacobi case -- we will address this gap in a forthcoming article \cite{Fpreprint}.

\begin{theorem} \label{t:homog:cmv}
Let $\mathcal L_{\D}$ denote the set of limit-periodic complex-valued sequences $\alpha$ with $\alpha(n) \in \D$ for every $n \in \Z$, and, for each $\tau < 1$, denote by $\mathcal H_{\tau}^{\mathrm{CMV}}$ the set of $\alpha \in \mathcal L_{\D}$ such the that $\sigma(\E_\alpha)$ is a $\tau$-homogeneous Cantor set.\footnote{We call a closed subset $K \subseteq \partial \D$ $\tau $-\emph{homogeneous} if and only if it satisfies a bound of the form \eqref{eq:homog:def} with $|\cdot|$ interpreted as arc-length measure on $\partial \D$.} Then $\mathcal H_{\tau}^{\mathrm{CMV}}$ is dense in $\mathcal L_{\D}$ for every $\tau < 1$.
\end{theorem}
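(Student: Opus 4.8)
The plan is to follow the proof of Theorem~\ref{t:jacobi:homog} essentially verbatim, replacing Jacobi Floquet theory by the corresponding Floquet theory for periodic CMV operators and replacing Lebesgue measure on $\R$ by arc-length measure on $\partial\D$. First I would recall the relevant periodic theory from \cite{simopuc1, simopuc2}: for a $p$-periodic sequence $\alpha$, the monodromy matrix yields a discriminant $\Delta = \Delta_\alpha$ which, after the appropriate normalization, is a real-valued Laurent expression on $\partial\D$, and one has $\sigma(\E_\alpha) = \{ z \in \partial\D : \Delta(z) \in [-2,2]\}$. This exhibits the spectrum as a finite union of closed arcs (bands) separated by open arcs (gaps), exactly as in the self-adjoint case; in particular the spectrum of any periodic CMV operator is trivially $1$-homogeneous with respect to arc-length.

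Fix $\alpha \in \mathcal L_{\D}$ and $\varepsilon > 0$. Since $\alpha$ is limit-periodic, I would first choose a periodic $\alpha^{(0)}$ with $\norm{\alpha - \alpha^{(0)}}_\infty < \varepsilon/2$, and then construct a sequence of periodic Verblunsky coefficients $\alpha^{(k)}$, with period $p_k$ satisfying $p_{k-1} \mid p_k$, by the inductive band-splitting procedure. At stage $k$ one passes to a multiple of the current period and perturbs the coefficients by a summable amount so as to open a small controlled gap near the center of each band of $\sigma(\E_{\alpha^{(k-1)}})$, thereby splitting every band into two shorter subbands. The enabling ingredient is the CMV analog of the quantitative gap-opening lemma used in the Jacobi case: a perturbation of size $\eta$ supported on one period opens a gap whose arc-length is comparable to $\eta$, while the arc-lengths of the resulting subbands admit a lower bound. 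Arranging the perturbations to be summable and bounded by $\varepsilon/2$ in total guarantees that $\tilde\alpha = \lim_k \alpha^{(k)}$ exists, is limit-periodic, and satisfies $\norm{\alpha - \tilde\alpha}_\infty < \varepsilon$; since $\alpha^{(k)} \to \tilde\alpha$ in $\ell^\infty$, the operators converge in norm, and the construction is arranged so that $\sigma(\E_{\tilde\alpha}) = \bigcap_k \sigma(\E_{\alpha^{(k)}})$.

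To obtain homogeneity I would track the band arc-lengths across the induction. Forcing the longest band at stage $k$ to tend to $0$ makes $\bigcap_k \sigma(\E_{\alpha^{(k)}})$ totally disconnected and perfect, hence a Cantor set. For the quantitative bound \eqref{eq:homog:def}, I fix the target $\tau < 1$ in advance and, at each stage, remove only a fraction of each band small enough (depending on $\tau$) that the relative density of the surviving set in any arc is never pushed below $\tau$; the worst case occurs for balls $B_\delta(x)$ centered at points of the limiting set whose radius is comparable to a gap being opened, and choosing the removed fraction appropriately at every scale keeps the density above $\tau$ uniformly in $x$ and in $\delta \leq \delta_0$. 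Here the CMV version differs only cosmetically from the Jacobi argument: arc-length near a band edge, and the response of band length to the gap-opening perturbation, are governed by the behavior of $\Delta$ and $\Delta'$ on $\partial\D$ in the same way that the Jacobi discriminant controls the self-adjoint case.

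The main obstacle is precisely this homogeneity lower bound, i.e. establishing the quantitative band-length estimates for the unitary discriminant that allow one to open gaps of prescribed (small) relative size while retaining a uniform lower bound on the arc-lengths of the resulting subbands at every scale. Once the CMV gap-opening lemma is in place, the bookkeeping that balances ``gaps small enough to stay within $\varepsilon$ and to preserve $\tau$-homogeneity'' against ``bands shrinking to produce a Cantor set'' is identical to the Jacobi case. Note that, in contrast to Theorem~\ref{t:jacobi:homog}, no control of the spectral type is asserted here, since the result of Egorova \cite{egorova} on which that control rests is stated only for Jacobi matrices; the present construction therefore stops at the geometric conclusion that $\sigma(\E_{\tilde\alpha})$ is a $\tau$-homogeneous Cantor set.
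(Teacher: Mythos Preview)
Your high-level strategy matches the paper's: rerun the Jacobi argument with CMV Floquet theory and arc-length measure on $\partial\D$ in place of their self-adjoint counterparts. Section~\ref{sec:cmvproofs} does exactly this, citing the discriminant description of $\sigma(\E_\alpha)$, the density of $p$-generic periodic coefficients from \cite{simopuc2}, Ong's band-length bound $|A|\leq 2\pi/p$ from \cite{ong12} (this is what forces the maximal band length to zero and yields the Cantor property), the CMV break-point lemma (Lemma~\ref{l:cmv:breakpts}), and the H\"older-type estimate $\|\E_\alpha-\E_{\alpha'}\|^2\leq 72\|\alpha-\alpha'\|_\infty$ from \cite{simopuc1} in place of the Jacobi identity $\|J_{a,b}-J_{a,b'}\|=\|b-b'\|_\infty$. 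You are also right that no spectral-type claim is made, precisely because Egorova's theorem is unavailable for CMV operators.

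Two points in your sketch, however, do not match what is actually provable and should be corrected. First, the assertion that one can arrange $\sigma(\E_{\tilde\alpha})=\bigcap_k\sigma(\E_{\alpha^{(k)}})$ is unjustified: under small perturbations the band edges move in both directions, so the approximant spectra are \emph{not} nested, and neither the paper nor its Jacobi proof uses any such nesting. Instead one proves the finite-stage homogeneity bound $|B_\delta(E)\cap\sigma_n|\geq\delta(1-\sum_{\ell\leq n}r_\ell)$ for each $n$ and then passes to the limit via Hausdorff convergence of spectra (Proposition~\ref{p:unit:specdist}) together with upper semicontinuity of arc-length under Hausdorff limits (the $\partial\D$ analog of Proposition~\ref{p:lebmsr:semicont}). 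Second, the gaps are not opened ``near the center of each band'' with length ``comparable to $\eta$''. Rather, one subdivides each band at its $k$-break points (solutions of $\Delta(z)=2\cos(\pi j/k)$, i.e.\ points of equal harmonic measure), and the break-point lemma then localizes the $(k-1)p$ new gaps of any sufficiently close $kp$-generic perturbation inside $\varepsilon$-arcs around those break points. The homogeneity bookkeeping uses only \emph{upper} bounds on the removed arcs (each of length at most $2\varepsilon_j$), exactly as in \eqref{eq:key:ineq}; no lower bound on gap length is needed or proved.
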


It is frequently profitable to imbed limit-periodic sequences into a dynamical context. Specifically, any limit-periodic sequence is Bohr almost-periodic, and so its hull naturally enjoys the structure of a compact abelian topological group. Moreover, it is well-known that an almost-periodic sequence is limit-periodic if and only if its hull is totally disconnected; a detailed discussion of this may be found in \cite[Section~2]{avila}. In light of this, the following definition is natural.

\begin{defi} A \emph{Cantor group} is a compact, abelian, totally disconnected topological group. A \emph{monothetic group} is a topological group  which contains a dense cyclic subgroup. A generator of this dense subgroup is referred to as a \emph{topological generator} of the monothetic group.
\end{defi} 

Standard examples of monothetic Cantor groups include the additive group of $p$-adic integers and the profinite completion of $\Z$. More generally, the class of Cantor groups precisely coincides with the class of infinite profinite abelian groups; see \cite{ribezal,wilson}, for example.

As a consequence of this characterization of limit-periodic sequences via their hulls, it follows that limit-periodic sequences are precisely those which can be generated by continuously sampling along orbits of a minimal translation of a monothetic Cantor group; compare \cite[Lemma~2.2]{avila}. More precisely, a complex-valued sequence $s$ is limit-periodic if and only if one can produce a monothetic Cantor group $\Omega$, a topological generator $\theta$ of $\Omega$, an element $\omega \in \Omega$, and $f \in C(\Omega,\C)$ such that
\begin{equation} \label{eq:lp:cantdef}
s(n)
=
s_\omega^f(n)
:=
f(n\theta + \omega),
\, n \in \Z.
\end{equation}
Given $f \in C(\Omega,\R)$ and a $p$-periodic positive sequence $a$, one obtains Jacobi operators $J_{a,\omega}^f$ with Jacobi parameters $(a,b_\omega^f)$, where $b_\omega^f = s_\omega^f$, as in \eqref{eq:lp:cantdef}. By a standard argument using minimality and strong operator convergence, there exists a deterministic compact set $\Sigma_a^f \subseteq \R$ with $\Sigma_a^f = \sigma(J_{a,\omega}^f)$ for every $\omega \in \Omega$. 

Similarly, if we take $g \in C(\Omega,\D)$, then, for each $\omega \in \Omega$, we obtain a limit-periodic CMV operator $\E_\omega^g$ defined by $\alpha_\omega^g = s_\omega^g$ with $s$ defined by \eqref{eq:lp:cantdef}. As in the Jacobi case, there is a fixed compact set $\Sigma^g \subseteq \partial \D$ with $\sigma(\E_\omega^g) = \Sigma^g$ for every $\omega \in \Omega$.

This point of view is particularly pleasant, since one may fix the underlying dynamics (i.e.\ $\Omega$ and $\theta$) and consider the dependence of spectral properties on $f , g \in C(\Omega)$. Our proofs of homogeneity are robust enough to pass to this setting and produce a dense set of elements of $C(\Omega)$ which produce $\tau$-homogeneous Cantor spectrum.

\begin{theorem} \label{t:jacobi:cantgroup:homog}
Fix a monothetic Cantor group $\Omega$, a topological generator $\theta \in \Omega$, a positive periodic sequence $a$, and $\tau < 1$. Then there is a dense subset $\mathcal H_\tau^a \subseteq C(\Omega,\R)$ such that $\Sigma_a^f$ is a $\tau$-homogeneous Cantor set and $J_{a,\omega}^f$ has purely absolutely continuous spectrum for every $f \in \mathcal H_\tau^a$ and every $\omega \in \Omega$.
\end{theorem}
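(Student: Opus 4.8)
The plan is to carry out the same inductive gap-opening construction that underlies Theorem~\ref{t:jacobi:homog}, but performed entirely at the level of $C(\Omega,\R)$ so that the fixed dynamics $(\Omega,\theta)$ are respected throughout. The starting observation is structural: since $\Omega$ is a monothetic Cantor group and $\theta$ is a topological generator, $\Omega$ is procyclic, i.e.\ an inverse limit of finite cyclic groups. Concretely, one has a decreasing chain of clopen finite-index subgroups $\Omega = \Omega_0 \supseteq \Omega_1 \supseteq \Omega_2 \supseteq \cdots$ with $\bigcap_k \Omega_k = \{0\}$, and each quotient $\Omega/\Omega_k$ is cyclic of some order $q_k$ with $q_k \mid q_{k+1}$ (cyclicity follows because the image of $\theta$ generates $\Omega/\Omega_k$). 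A function $f \in C(\Omega,\R)$ that factors through $\Omega/\Omega_k$ is locally constant, and for every $\omega$ the sampled sequence $b_\omega^f(n) = f(n\theta+\omega)$ is then periodic with period $q_k$. By Stone--Weierstrass such locally constant functions are dense in $C(\Omega,\R)$, so it suffices, given $f_0 \in C(\Omega,\R)$ and $\varepsilon>0$, to produce $f \in \mathcal H_\tau^a$ with $\|f-f_0\|_\infty < \varepsilon$.

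First I would fix a starting locally constant $f_1$ with $\|f_1 - f_0\|_\infty < \varepsilon/2$ and then build $f = \lim_k f_k$ as a rapidly convergent sum $f = f_1 + \sum_{k\ge1}(f_{k+1}-f_k)$, where each increment $f_{k+1}-f_k$ is a small locally constant function factoring through a suitably chosen finer quotient $\Omega/\Omega_{m_k}$. Writing $p$ for the period of the fixed sequence $a$ and $P_k = \mathrm{lcm}(p,q_{m_k})$ for the combined period of $J_{a,\omega}^{f_k}$, each $f_k$ yields a periodic Jacobi matrix whose spectrum is, by Floquet--Bloch theory, a union of at most $P_k$ closed bands, namely the set where the discriminant (the trace of the one-period transfer matrix) lies in $[-2,2]$; this set is independent of $\omega$ and equals $\sigma(J_{a,\omega}^{f_k})$. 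The increment $f_{k+1}-f_k$ is chosen so that, upon refining the period from $P_k$ to $P_{k+1}$, the discriminant is perturbed to open small gaps at the new band edges, subdividing each surviving band into several shorter ones. The quantitative input needed here is exactly the gap-opening and band-length control from the proof of Theorem~\ref{t:jacobi:homog}: a lower bound on each newly created band and an upper bound on the total length removed at stage $k$, both uniform over the bands.

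With these estimates in hand, two of the three conclusions are routine. Total disconnectedness and absence of isolated points of $\Sigma_a^f = \bigcap_k \sigma(J_{a,\omega}^{f_k})$ follow because the maximal band length tends to $0$ while every band is repeatedly subdivided, so $\Sigma_a^f$ is a Cantor set; this is again $\omega$-independent by the stated determinism of $\Sigma_a^f$. Pure absolute continuity for every $\omega$ follows from Egorova's criterion \cite{egorova}, provided the increments decay fast enough (geometrically, say); the decisive point is that the rate of approximation of $b_\omega^f$ by the $P_k$-periodic sequences is governed by $\|f_{k+1}-f_k\|_\infty$ and is therefore uniform in $\omega$, so the a.c.\ conclusion holds simultaneously for all $\omega \in \Omega$.

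The hard part is the homogeneity lower bound \eqref{eq:homog:def}, and this is where the construction must be calibrated rather than merely made convergent. The mechanism is that for $x \in \Sigma_a^f$ and small $\delta$ one locates the stage $k$ at which the band $B$ of $\sigma(J^{f_k})$ containing $x$ has length comparable to $\delta$; the proportion of $\Sigma_a^f$ inside $B$ is then bounded below by the infinite product, over the finer stages, of the ratios $(\text{surviving band length})/(\text{parent band length})$, the worst case occurring when $x$ sits at a band edge and one sees only one side of the freshly opened gap. To force this product to exceed $\tau$ uniformly in $x$ and $\delta$ for any prescribed $\tau<1$, I would choose the gaps opened at stage $k$ so small relative to the ambient band lengths that the relative loss $1-(\text{ratio})$ is summable with sum less than $1-\tau$, the bound holding for every band at every scale. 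Arranging this simultaneously with the gap-opening that guarantees Cantor structure---opening enough gaps to disconnect every band, yet removing little enough mass to retain density $\tau$---is the crux, and it is achieved by the same careful perturbative choice of the locally constant increments $f_{k+1}-f_k$ as in Theorem~\ref{t:jacobi:homog}. Since the entire construction takes place within $C(\Omega,\R)$ and the resulting $f$ satisfies $\|f-f_0\|_\infty<\varepsilon$, the set $\mathcal H_\tau^a$ so obtained is dense.
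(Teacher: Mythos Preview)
Your proposal is correct and takes essentially the same route as the paper: run the gap-opening construction from Theorem~\ref{t:jacobi:homog} with periods chosen from the subgroup indices of $\Omega$ so that each periodic approximant lifts to a locally constant element of $C(\Omega,\R)$ (this is the paper's Proposition~\ref{p:per:sf}), then pass to the uniform limit. One minor slip: $\Sigma_a^f$ is the Hausdorff limit of the $\sigma(J_{a,\omega}^{f_k})$ rather than their intersection, since the approximating spectra are not nested, but this does not affect the substance of your argument.
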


\begin{theorem} \label{t:homog:cmv:cantgroup}
Fix a monothetic Cantor group $\Omega$, a topological generator $\theta$, and denote by $\mathcal H_\tau^{\mathrm{CMV}} \subseteq C(\Omega,\D)$ the set of $g$ such that $\Sigma^g$ is a $\tau$-homogeneous Cantor set. For each $\tau < 1$, $\mathcal H_\tau^{\mathrm{CMV}}$ is dense in $C(\Omega,\D)$.
\end{theorem}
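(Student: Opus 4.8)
The plan is to mirror the proof of Theorem~\ref{t:jacobi:cantgroup:homog}, replacing Jacobi operators by CMV operators throughout and substituting the Floquet theory for periodic Jacobi matrices with the corresponding theory for periodic CMV matrices. Fix $g \in C(\Omega,\D)$, $\varepsilon > 0$, and $\tau < 1$; the goal is to produce $\tilde g \in \mathcal H_\tau^{\mathrm{CMV}}$ with $\norm{\tilde g - g}_\infty < \varepsilon$. Since $\Omega$ is a monothetic Cantor group, it is profinite, so it admits a decreasing sequence of open (hence finite-index) subgroups $\Omega_1 \supseteq \Omega_2 \supseteq \cdots$ with $\bigcap_k \Omega_k = \{0\}$ whose cosets generate the topology; because $\Omega$ is monothetic with generator $\theta$, each finite quotient $\Omega/\Omega_k$ is cyclic and $\theta$ maps to a generator. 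Thus a function in $C(\Omega,\D)$ which is constant on cosets of $\Omega_k$ factors through $\Omega/\Omega_k$, and sampling it along the orbit of $\theta$ produces a sequence of period $p_k = [\Omega:\Omega_k]$, hence a periodic CMV operator. By density of locally constant functions I would first choose $g_0$ factoring through some $\Omega/\Omega_{k_0}$ with $\norm{g_0 - g}_\infty < \varepsilon/2$.

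Next I would build a Cauchy sequence $(g_j)_{j \ge 0}$ in $C(\Omega,\D)$, each $g_j$ factoring through a finite quotient $\Omega/\Omega_{k_j}$ with $k_j$ strictly increasing, converging to the desired $\tilde g$. For periodic $g_j$ the operator $\E_\omega^{g_j}$ is $p_j$-periodic, and by Floquet--Bloch theory for periodic CMV operators (see \cite{simopuc1, simopuc2}) its spectrum $\Sigma^{g_j}$ is a finite union of closed arcs (bands) of $\partial\D$, cut out by the condition that the associated Floquet discriminant be real and of modulus at most two, the bands being separated by open gaps (here one may arrange, by a further refinement, that each $p_j$ is even so that the standard periodic CMV formalism applies). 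At each stage I would refine the period (pass from $\Omega_{k_j}$ to a strictly smaller $\Omega_{k_{j+1}}$) and make a small, carefully chosen perturbation $g_{j+1} - g_j$, adapted to the new coset structure, which opens a gap near the center of every band of $\Sigma^{g_j}$, splitting each arc into two shorter sub-arcs. The perturbation magnitudes are chosen so that $\sum_j \norm{g_{j+1} - g_j}_\infty < \varepsilon/2$, guaranteeing convergence $g_j \to \tilde g$ with $\norm{\tilde g - g}_\infty < \varepsilon$, and so that the number of bands tends to infinity while individual band lengths shrink to zero uniformly; this yields that $\Sigma^{\tilde g} = \lim_j \Sigma^{g_j}$ (in the Hausdorff metric, via norm-resolvent convergence and minimality, which pass to the $\omega$-independent set $\Sigma^g$) is totally disconnected with no isolated points, i.e.\ a Cantor subset of $\partial\D$.

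The crux is the homogeneity bookkeeping, and it is here that the main obstacle lies. To secure the uniform lower bound \eqref{eq:homog:def} for arc-length measure, I would, at each splitting step, control the \emph{relative} size of each newly opened gap against the length of the band it divides: if at every stage each band of length $\ell$ is split by a gap occupying at most a fraction $\eta_j$ of $\ell$, with the $\eta_j$ chosen summably small, then a standard Cantor-set density estimate shows that for every $x \in \Sigma^{\tilde g}$ and every sufficiently small $\delta$ one has $|B_\delta(x)\cap\Sigma^{\tilde g}| \ge \tau\delta$, with $\tau$ approaching $1$ as the $\eta_j$ shrink. Implementing this requires quantitative Floquet estimates for periodic CMV operators relating the strength of the perturbation to both the location and the width of the opened gap, together with uniform lower bounds on the lengths of the surviving bands---precisely the CMV analogues of the discriminant (band-edge) estimates used in the Jacobi case. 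Translating those estimates to the unitary setting, where the discriminant governs arcs of $\partial\D$ rather than intervals of $\R$, and doing so uniformly in $\omega$, is the technical heart of the argument. Once these estimates are in hand, the perturbative scheme of Theorem~\ref{t:jacobi:cantgroup:homog} applies essentially verbatim, delivering $\tilde g \in \mathcal H_\tau^{\mathrm{CMV}}$ and hence the asserted density; in line with the theorem statement, no claim on the spectral type is made, since the control of absolutely continuous spectrum in Theorem~\ref{t:jacobi:homog} relied on Egorova's Jacobi-specific results.
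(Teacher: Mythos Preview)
Your proposal is correct and follows essentially the same route as the paper: reduce to the periodic case via the profinite structure of $\Omega$, then run the perturbative gap-opening scheme with CMV Floquet theory in place of Jacobi Floquet theory, exactly as in the passage from Theorem~\ref{t:jacobi:homog} to Theorem~\ref{t:jacobi:cantgroup:homog}.

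One small imprecision is worth flagging. You describe each step as ``opening a gap near the center of every band \ldots\ splitting each arc into two shorter sub-arcs.'' In the Cantor-group setting the period multiplier at stage $j$ is forced to be $k_j = [\Omega_{k_{j-1}}:\Omega_{k_j}]$, which need not equal $2$; accordingly each band of $\Sigma^{g_{j-1}}$ is split not in half but into $k_j$ sub-arcs, with $k_j-1$ new gaps forming near the proper $k_j$-break points (the points partitioning each band into sub-arcs of equal harmonic measure). The paper's Lemma~\ref{l:cmv:breakpts} gives precisely this control, and the homogeneity bookkeeping (your $\eta_j$ argument) must then account for up to $\delta/\lambda_\ell$ new gaps meeting a window of width $\delta$, not just one per band --- this is the content of the estimate surrounding \eqref{eq:key:ineq}. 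Since you ultimately defer to ``the perturbative scheme of Theorem~\ref{t:jacobi:cantgroup:homog} \ldots\ essentially verbatim,'' this is an imprecision in the informal sketch rather than a genuine gap. The specific CMV ingredients the paper supplies are the band-length bound $|A|\le 2\pi/p$ from \cite{ong12}, the break-point lemma~\ref{l:cmv:breakpts}, and the H\"older estimate \eqref{eq:cmv:holder} replacing the identity $\|J_{a,b}-J_{a,b'}\|=\|b-b'\|_\infty$.
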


The structure of the paper is as follows. In Section~\ref{sec:lemmas}, we recall a few standard facts from functional analysis and some necessary pieces of Floquet theory and use these ingredients to prove a gap-opening lemma. This lemma is then used in Section~\ref{sec:proof} to prove Theorems~\ref{t:jacobi:homog} and \ref{t:jacobi:cantgroup:homog}. Clearly, these theorems imply Corollaries~\ref{c:jacobi:homdense} and \ref{c:lpspec:homog}. Finally, Section~\ref{sec:cmvproofs} discusses the necessary modifications to the proofs in the CMV case to obtain Theorems~\ref{t:homog:cmv} and \ref{t:homog:cmv:cantgroup}. The appendix proves a version of a band length estimate for periodic Jacobi matrices which is due to Deift-Simon and Avila in the Schr\"odinger case.

\section*{Acknowledgements}

The author is grateful to David Damanik for comments which improved the exposition, and to Artur Avila for a helpful discussion on the band length estimate from Appendix~A. He is grateful to Milivoje Lukic and Darren Ong for helpful conversations regarding the extension of the main theorems to the CMV setting, and to Fritz Gesztesy for helpful comments on the literature. The author would also like to thank the anonymous referee for suggesting the more elegant proof of Lemma~\ref{l:breakpts} which appears in this version.

\section{Preliminaries} \label{sec:lemmas}

\subsection{The Hausdorff Metric} 

For our proof of Theorem~\ref{t:jacobi:homog}, we will make use of two facts about the Hausdorff metric, whose definition we briefly recall. Given two compact subsets $F,K \subseteq \R$, put
\begin{equation} \label{eq:hdmetric:def}
d_{\Hd}(F,K)
:=
\inf\{ \varepsilon > 0 : F \subseteq B_\varepsilon(K)  \text{ and } K \subseteq B_\varepsilon(F) \},
\end{equation}
where $B_\varepsilon(X)$ denotes the open $\varepsilon$-neighborhood of the set $X \subseteq \R$. The function $d_{\Hd}$ defines a metric on the space of compact subsets of $\R$, known as the \emph{Hausdorff metric}. The following propositions are standard. Since the proofs are short, we include them for the convenience of the reader.

\begin{prop} \label{p:lebmsr:semicont}
Suppose that $(F_n)_{n=1}^\infty$ and $(K_n)_{n=1}^\infty$ are sequences of compact subsets of $\R$. If there exist compact sets $F$ and $K$ such that $F_n \to F$ and $K_n \to K$ with respect to $d_{\Hd}$ as $n \to \infty$, then
$$
|F \cap K|
\geq
\limsup_{n\to\infty}|F_n \cap K_n|.
$$
\end{prop}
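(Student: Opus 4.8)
The plan is to show upper semicontinuity of the Lebesgue measure of the intersection under Hausdorff convergence. The key observation is that Hausdorff convergence gives a one-sided containment: for any $\varepsilon > 0$, eventually $F_n \subseteq B_\varepsilon(F)$ and $K_n \subseteq B_\varepsilon(K)$, hence $F_n \cap K_n \subseteq B_\varepsilon(F) \cap B_\varepsilon(K)$. Taking Lebesgue measure yields
$$
\limsup_{n\to\infty} |F_n \cap K_n|
\leq
|B_\varepsilon(F) \cap B_\varepsilon(K)|
$$
for every $\varepsilon > 0$. The result then follows by letting $\varepsilon \downarrow 0$, provided the right-hand side converges to $|F \cap K|$.

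The main point to justify is therefore that $|B_\varepsilon(F) \cap B_\varepsilon(K)| \to |F \cap K|$ as $\varepsilon \downarrow 0$. First I would note that the sets $B_\varepsilon(F) \cap B_\varepsilon(K)$ are open, they are monotonically decreasing in $\varepsilon$, and their intersection over all $\varepsilon > 0$ is exactly the closed set $\overline{F} \cap \overline{K} = F \cap K$ (using that $F, K$ are closed, so $\bigcap_{\varepsilon > 0} B_\varepsilon(F) = F$ and likewise for $K$). To control measures I would fix a large bounded interval $I$ containing a neighborhood of $F \cup K$; since all the sets in question are eventually contained in such a bounded region, I may work with finite measures. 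Then continuity of measure from above along the decreasing nested family $B_\varepsilon(F) \cap B_\varepsilon(K)$ gives
$$
\lim_{\varepsilon \downarrow 0} |B_\varepsilon(F) \cap B_\varepsilon(K)|
=
\Bigl| \bigcap_{\varepsilon > 0} \bigl( B_\varepsilon(F) \cap B_\varepsilon(K) \bigr) \Bigr|
=
|F \cap K|.
$$

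Combining the two displays closes the argument: for every $\varepsilon > 0$ we have $\limsup_n |F_n \cap K_n| \leq |B_\varepsilon(F) \cap B_\varepsilon(K)|$, and the right side tends to $|F \cap K|$ as $\varepsilon \downarrow 0$, giving $\limsup_n |F_n \cap K_n| \leq |F \cap K|$, which is exactly the claimed inequality.

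The only genuinely delicate step is the passage to the limit in $\varepsilon$, and specifically verifying that $\bigcap_{\varepsilon > 0} B_\varepsilon(F) = F$ when $F$ is closed; this is where closedness of $F$ and $K$ is essential, since the analogous statement fails for non-closed sets. The reduction to a bounded ambient interval (needed so that continuity from above of Lebesgue measure applies, as that property requires at least one set in the nested family to have finite measure) is routine but worth stating explicitly, since it is the one hypothesis beyond the set-theoretic nesting that the measure-continuity argument requires. I would expect no other obstacles; the inequality goes only one way precisely because Hausdorff convergence controls $F_n \cap K_n$ from above but not from below.
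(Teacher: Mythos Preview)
Your proof is correct and takes a somewhat different route from the paper's. The paper first argues the stronger containment $F_n \cap K_n \subseteq B_\delta(F \cap K)$ for all large $n$ (a fact it calls ``easy to see,'' though it implicitly requires a compactness/subsequence argument: a point in $F_n \cap K_n$ is simultaneously close to $F$ and to $K$, and any limit point must lie in $F \cap K$). It then covers $F \cap K$ by a finite union of open intervals $O$ of measure barely exceeding $|F \cap K|$ and bounds $|B_\delta(O)|$ explicitly. You instead use only the immediate containment $F_n \cap K_n \subseteq B_\varepsilon(F) \cap B_\varepsilon(K)$ from the definition of Hausdorff convergence, and then invoke continuity of measure from above on the nested family $B_\varepsilon(F) \cap B_\varepsilon(K)$, which intersects down to $F \cap K$. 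Your approach trades the paper's hands-on covering estimate for a cleaner appeal to measure theory, and it sidesteps the need to justify the stronger containment; the paper's approach is a bit more explicit about constants. Both are short and valid.
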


\begin{proof}
Given $\varepsilon > 0$, we may use compactness of $F \cap K$ to choose finitely many open intervals $I_1,\ldots, I_m$ with $F \cap K \subseteq O := \bigcup_{j=1}^m I_j$ and $|O| <  |F \cap K| + \varepsilon/2$. Now, take
$$
\delta = \frac{\varepsilon}{4m}.
$$
It is easy to see that $F_n \cap K_n \subseteq B_\delta(F \cap K)$ for all sufficiently large $n$. For such large $n$, one then has $F_n \cap K_n \subseteq B_\delta(O)$, which yields
$$
|F_n \cap K_n|
\leq
|B_\delta(O)|
\leq
|O| + 2m \delta
<
|F \cap K| + \varepsilon.
$$
This argument clearly implies the desired semicontinuity statement.
\end{proof}

\begin{prop} \label{p:specdist}
If $S$ and $T$ are bounded self-adjoint operators on a Hilbert space $\mathcal H$, then
\begin{equation} \label{eq:specdist}
d_{\Hd}(\sigma(S),\sigma(T))
\leq
\|S - T \|.
\end{equation}
\end{prop}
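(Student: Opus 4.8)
The plan is to prove the two set inclusions underlying the Hausdorff metric separately, and then invoke symmetry. By the definition \eqref{eq:hdmetric:def}, it suffices to show that $\sigma(S) \subseteq B_\varepsilon(\sigma(T))$ and $\sigma(T) \subseteq B_\varepsilon(\sigma(S))$ for every $\varepsilon > \norm{S-T}$; the inf then yields \eqref{eq:specdist}. Since the roles of $S$ and $T$ are interchangeable, I would only establish the first inclusion, i.e.\ that every $\lambda \in \sigma(S)$ obeys $\dist(\lambda, \sigma(T)) \leq \norm{S-T}$. I would argue by contraposition: assuming $d := \dist(\lambda, \sigma(T)) > \norm{S-T}$, I will show $\lambda$ lies in the resolvent set of $S$, so that $\lambda \notin \sigma(S)$.

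The key ingredient is the resolvent norm identity for self-adjoint operators, namely that for $\lambda \notin \sigma(T)$ one has $\norm{(T-\lambda)^{-1}} = 1/\dist(\lambda,\sigma(T))$. This follows from the spectral theorem (the functional calculus identifies $(T-\lambda)^{-1}$ with the function $t \mapsto (t-\lambda)^{-1}$ on $\sigma(T)$, whose supremum norm is precisely $1/\dist(\lambda,\sigma(T))$), and this is exactly the step where self-adjointness is indispensable. With $d > \norm{S-T}$, the point $\lambda$ is automatically in the resolvent set of $T$, and I would factor
$$
S - \lambda
=
(T - \lambda)\bigl( I + (T-\lambda)^{-1}(S - T) \bigr).
$$
The perturbation term satisfies $\norm{(T-\lambda)^{-1}(S-T)} \leq \norm{(T-\lambda)^{-1}} \, \norm{S-T} = \norm{S-T}/d < 1$, so $I + (T-\lambda)^{-1}(S-T)$ is invertible via the Neumann series. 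As $T - \lambda$ is also invertible, the product $S - \lambda$ is invertible, whence $\lambda \notin \sigma(S)$.

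The contradiction shows that every $\lambda \in \sigma(S)$ has $\dist(\lambda,\sigma(T)) \leq \norm{S-T}$, hence $\sigma(S) \subseteq B_\varepsilon(\sigma(T))$ for all $\varepsilon > \norm{S-T}$; exchanging $S$ and $T$ gives the reverse inclusion, and \eqref{eq:specdist} follows. I expect the only genuine obstacle to be the justification of the resolvent norm identity, which is really the sole place self-adjointness enters; everything else is a routine Neumann series estimate. One could, if desired, avoid citing the full spectral theorem by instead using the self-adjoint lower bound $\norm{(T-\lambda)\psi} \geq \dist(\lambda,\sigma(T)) \norm{\psi}$ together with the surjectivity of $T-\lambda$ for $\lambda$ in the resolvent set, but invoking the functional calculus is cleaner.
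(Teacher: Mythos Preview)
Your argument is correct and is essentially identical to the paper's proof: both establish the contrapositive inclusion by invoking the spectral-theoretic identity $\norm{(T-\lambda)^{-1}} = 1/\dist(\lambda,\sigma(T))$, factor $S-\lambda = (T-\lambda)\bigl(I + (T-\lambda)^{-1}(S-T)\bigr)$, and appeal to the Neumann series to conclude invertibility, then finish by symmetry. The only cosmetic difference is that the paper works directly with $\delta = \norm{S-T}$ rather than passing through $\varepsilon > \norm{S-T}$ and taking an infimum.
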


\begin{proof}
Let $\delta = \|T-S\|$, and suppose $x \in \R$ satisfies $d(x,\sigma(T)) > \delta$. In particular, $T-x$ is invertible and, by the spectral theorem, one has 
$$
\left\|(T-x)^{-1} \right\|^{-1} = d(x,\sigma(T)) > \delta.
$$
By an easy geometric series argument, it follows that 
$$
S-x = (T-x) + (S-T)
=
(T-x)\left( I + (T-x)^{-1} (S-T)\right)
$$
is invertible, i.e., $x \notin \sigma(S)$. Thus, the $\delta$-neighborhood of $\sigma(T)$ contains $\sigma(S)$. By symmetry, one may run the previous argument with the roles of $S$ and $T$ reversed, which suffices to establish \eqref{eq:specdist}.
\end{proof}

\subsection{Floquet Theory}

In order to describe our main gap-opening lemma, we give a very brief overview of the necessary highlights of Floquet theory for periodic Jacobi operators and prove a minor variant of a gap-opening lemma due to Avila. Suppose $a,b\in \ell^\infty(\Z)$ are $p$-periodic for some $p \in \Z_+$, that is,
$$
a(n+p) = a(n), \, \, b(n+p) = b(n)
\quad
\text{for all } n \in\Z.
$$
Given $E \in \R$, the study of the eigenvalue equation
\begin{equation} \label{eq:eveq}
a(n-1) u(n-1) + a(n) u(n+1) + b(n) u(n) = E u(n)
\quad
\text{for all }n \in \Z
\end{equation}
leads one to define the transfer matrices $T_E = T_E^{(a,b)}$ and $A_E = A_E^{(a,b)}$ via
$$
T_E(n)
=
\frac{1}{a(n)}
\begin{pmatrix} E - b(n) & -1 \\ a(n)^2 & 0 \end{pmatrix},
\quad
A_E(n) 
=
\begin{cases}
T_E(n) \cdots T_E(1) & n \geq 1 \\
I & n = 0 \\
T_E(n+1)^{-1} \cdots T_E(0)^{-1} & n \leq -1
\end{cases}
$$
Specifically, a complex-valued sequence $u$ satisfies \eqref{eq:eveq} if and only if
$$
\begin{pmatrix} u(n+1) \\ a(n) u(n) \end{pmatrix}
=
A_E(n)
\begin{pmatrix} u(1) \\ a(0) u(0) \end{pmatrix}
\text{ for every } n \in \Z.
$$
The \emph{monodromy matrix} of $J_{a,b}$ is the transfer matrix over a full period; more precisely,
$$
\Phi_E
=
A_E^{(a,b)}(p)
=
T_E(p) \cdots T_E(1).
$$
The \emph{discriminant} of $J_{a,b}$ is defined by $D(E) = \tr(\Phi_E)$. 

One can also consider restrictions of $J_{a,b}$ with suitable periodic or antiperiodic boundary conditions. Specifically, let
\begin{equation} \label{eq:bprob}
J_{a,b}^{p,\pm}
=
\begin{pmatrix}
b(1) & a(1) &   &  & \pm a(p) \\
a(1) & b(2) & a(2)  &  &  \\
 & \ddots & \ddots & \ddots  & & \\
& & a(p-2) & b(p-1) & a(p-1) \\
\pm a(p) && & a(p-1) & b(p)
\end{pmatrix}.
\end{equation}
It is easy to see that $E$ is an eigenvalue of $J_{a,b}^{p,+}$ if and only if there is a nontrivial $p$-periodic solution $u$ of \eqref{eq:eveq} and $E$ is an eigenvalue of $J_{a,b}^{p,-}$ if and only if there is a nontrivial $p$-antiperiodic solution of \eqref{eq:eveq}. Specifically, if $u$ is a nontrivial eigenvector of $J_{a,b}^{p,\pm}$ with eigenvalue $E$, then $u$ can be extended to a two-sided sequence on $\Z$ such that \eqref{eq:eveq} holds and $u(n+p) = \pm u(n)$ for all $n \in \Z$.

It is well-known that the spectrum of $J_{a,b}$ can be determined either from the polynomial $D$ or from the matrices $J_{a,b}^{p,\pm}$. We summarize the relevant facts in the following theorem. Proofs and further details can be found in \cite[Chapter~5]{simszego}.

\begin{theorem} \label{t:floq}
If $a \in (-2,2)$, then all solutions of the equation $D(z) = a$ are real and simple. If $a = \pm 2$, then all solutions of $D(z) = a$ are real and of multiplicity at most two. A solution $E$ of $D(E) = \pm 2$ is of multiplicity two if and only if $\Phi_E = \pm I$. If $\alpha_j$ and $\beta_j$ denote the solutions of $D = \pm 2$ {\rm(}with multiplicity{\rm)}, ordered so that
$$
\alpha_1 \leq \beta_1 \leq \alpha_2 \leq \beta_2 \leq \cdots \leq \alpha_{p-1} \leq \beta_{p-1} \leq \alpha_p \leq \beta_p,
$$
then $\alpha_j < \beta_j$ for each $1 \leq j \leq p$, and
$$
\sigma(J_{a,b})
=
\bigcup_{j=1}^p [\alpha_j,\beta_j]
=
\{ E \in \R : |D(E)| \leq 2 \}.
$$
Moreover, the $\alpha$'s and $\beta$'s comprise the set of all eigenvalues of $J_{a,b}^{p,\pm}$. Specifically, the eigenvalues of $J_{a,b}^{p,+}$ are $\beta_p, \alpha_{p-1}, \beta_{p-2}, \alpha_{p-3},\ldots$, while the eigenvalues of $J_{a,b}^{p,-}$ are $\alpha_p, \beta_{p-1}, \alpha_{p-2}, \beta_{p-3},\ldots$.
\end{theorem}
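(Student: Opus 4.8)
The plan is to assemble the statement from three ingredients: an algebraic analysis of the monodromy matrix, a Bloch-wave description of $\sigma(J_{a,b})$, and an oscillation-theoretic monotonicity estimate for the discriminant, which will be the technical heart. First I would record the algebraic facts. Each transfer matrix $T_E(n)$ has determinant $1$ (the inner matrix $\begin{pmatrix} E-b(n) & -1 \\ a(n)^2 & 0\end{pmatrix}$ has determinant $a(n)^2$, cancelling the prefactor $a(n)^{-2}$), so $\det \Phi_E = 1$ and the eigenvalues of $\Phi_E$ form a reciprocal pair $\lambda, \lambda^{-1}$ with $\lambda + \lambda^{-1} = D(E)$; i.e.\ they are the roots of $\mu^2 - D(E)\mu + 1$. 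Expanding $T_E(p)\cdots T_E(1)$ shows $D$ is a real polynomial in $E$ of degree $p$ with leading coefficient $(\prod_{n=1}^p a(n))^{-1} > 0$. In particular $\Phi_E$ has a unimodular eigenvalue exactly when $D(E) \in [-2,2]$, eigenvalue $+1$ exactly when $D(E) = 2$, and eigenvalue $-1$ exactly when $D(E) = -2$. By the remarks preceding the theorem, these last two conditions detect nontrivial periodic, respectively antiperiodic, solutions, hence identify the roots of $D(z)=\pm 2$ with the eigenvalues of the self-adjoint matrices $J_{a,b}^{p,+}$ and $J_{a,b}^{p,-}$; since those matrices are self-adjoint, this already forces every solution of $D(z) = \pm 2$ to be real.

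Next I would obtain the spectral characterization $\sigma(J_{a,b}) = \{E : |D(E)| \le 2\}$. The cleanest route is a direct-integral decomposition: conjugating by the discrete Fourier transform adapted to the period $p$ writes $J_{a,b}$ as a direct integral over $k \in [0, 2\pi)$ of self-adjoint $p \times p$ fiber matrices $\hat J(k)$, whose eigenvalues solve $D(E) = 2\cos k$. Taking the union of fiber spectra over $k$ and letting $2\cos k$ sweep $[-2,2]$ yields the band picture $\bigcup_j [\alpha_j, \beta_j]$. Equivalently one argues directly with the cocycle: $E \notin \sigma(J_{a,b})$ iff the transfer matrices admit an exponential dichotomy iff $\Phi_E$ is hyperbolic iff $|D(E)| > 2$.

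The technical heart, and the step I expect to be the main obstacle, is the monotonicity estimate $D'(E) \neq 0$ whenever $|D(E)| < 2$. This is what upgrades ``at least one real solution per band'' to ``exactly $p$ simple real solutions'' for $D(z) = c$ with $|c| < 2$: since $D$ has degree $p$, strict monotonicity on each band forces all $p$ roots to be real, simple, and distributed one per band. I would prove it by analytic perturbation on the fibers: for $|D(E_0)| < 2$ the eigenvalue $\lambda(E_0) = e^{i\phi}$ of $\Phi_{E_0}$ is simple ($\phi \in (0,\pi)$, so $\lambda \neq \lambda^{-1}$), hence $\lambda(E)$ is analytic near $E_0$, and differentiating $D = \lambda + \lambda^{-1}$ gives $D'(E_0) = (1 - \lambda^{-2})\lambda'(E_0)$; a Wronskian computation shows the Floquet multiplier genuinely moves, i.e.\ $\lambda'(E_0) \neq 0$, whence $D'(E_0) \neq 0$. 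The same circle of ideas handles the $\pm 2$ levels: when $\Phi_{E_0} = \pm I$ one finds $D'(E_0) = 0$ but $D''(E_0) \neq 0$ (a double root), and when $\Phi_{E_0} \neq \pm I$ (a parabolic Jordan block) one finds $D'(E_0) \neq 0$ (a simple root), giving multiplicity at most two together with the stated criterion.

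Finally I would read off the ordering and the eigenvalue assignment. The degree-$p$ polynomial $D$ satisfies $D(E) \to +\infty$ as $E \to +\infty$, and by the monotonicity just established it sweeps across $[-2,2]$ on each band, exiting above $+2$ or below $-2$ alternately in the gaps; tracking the level $\pm 2$ at successive band edges from the top down produces the interlacing $\alpha_1 \le \beta_1 \le \cdots \le \alpha_p \le \beta_p$ with $\alpha_j < \beta_j$ (strictness since each band has nonempty interior, $a(n) \neq 0$), and simultaneously marks $\beta_p, \alpha_{p-1}, \beta_{p-2}, \ldots$ as the $D = 2$ points (eigenvalues of $J_{a,b}^{p,+}$) and $\alpha_p, \beta_{p-1}, \alpha_{p-2}, \ldots$ as the $D = -2$ points (eigenvalues of $J_{a,b}^{p,-}$).
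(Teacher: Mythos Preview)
The paper does not prove this theorem at all: it is stated as background with the sentence ``Proofs and further details can be found in \cite[Chapter~5]{simszego},'' and no argument is given. So there is no ``paper's own proof'' to compare against.

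Your sketch is a reasonable outline of the standard proof one finds in that reference. The three ingredients you identify---$\det\Phi_E=1$ and the reciprocal eigenvalue pair, the Bloch/direct-integral description of the spectrum, and the monotonicity $D'(E)\neq 0$ on $\{|D|<2\}$---are exactly the components of the textbook argument. A couple of places are thin as written: the claim ``a Wronskian computation shows $\lambda'(E_0)\neq 0$'' is the genuine content of the monotonicity step and deserves to be made explicit (e.g.\ via the Hellmann--Feynman-type identity for the fiber eigenvalues $E_j(k)$, which gives $dE_j/dk\neq 0$ on the band interior, or equivalently a direct computation of $D'$ in terms of solutions of \eqref{eq:eveq}); and the analysis at the band edges (distinguishing the parabolic case $\Phi_{E_0}\neq\pm I$ with $D'(E_0)\neq 0$ from the case $\Phi_{E_0}=\pm I$ with $D'(E_0)=0$, $D''(E_0)\neq 0$) is asserted rather than argued. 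But these are standard computations, and the overall strategy is correct and matches the literature.
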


\begin{defi}
We call intervals of the form $[\alpha_j,\beta_j]$ with $1 \leq j \leq p$ \emph{bands} of $\sigma(J_{a,b})$, while intervals of the form $(\beta_j,\alpha_{j+1})$ with $1 \leq j \leq p-1$ are called \emph{gaps} of the spectrum. Notice that the unbounded components of the resolvent set are not considered gaps. If $\beta_j = \alpha_{j+1}$, we say that the $j$th gap of $\sigma(J_{a,b})$ is \emph{closed}. To avoid repeating the phrase ``with all gaps open,'' we will say that $J_{a,b}$ is $p$-\emph{generic} if both $a$ and $b$ are $p$-periodic and $\sigma(J_{a,b})$ has precisely $p$ connected components. The \emph{band-interior} of the spectrum will be defined by
$$
\sigma_{\Int}(J_{a,b})
=
\bigcup_{j=1}^p (\alpha_j,\beta_j).
$$
Of course, this is different from the topological interior of $\sigma(J_{a,b})$ whenever the spectrum has closed gaps.

Since we fix a periodic off-diagonal sequence $a$, we will think of genericity of $J_{a,b}$ as a property of $b$; specifically, we will say that $b$ is a $(p,a)$-generic potential if $J_{a,b}$ is $p$-generic.
\end{defi}

The following gap-opening lemma is a straightforward modification of \cite[Claim~3.4]{avila}. We include the proof with cosmetic alterations to the Jacobi case for the convenience of the reader.

\begin{lemma} \label{l:gen:dense}
Suppose $b$ is a $(p,a)$-generic potential and $k\geq 2$. For each $t \in \R$, define a $kp$-periodic sequence $b_t$ by
$$
b_t(n)
=
\begin{cases}
b(n) & 1 \leq n \leq kp-1 \\
b(kp) + t & n = kp
\end{cases}
$$
Then $b_t$ is $(kp,a)$-generic for all but finitely many choices of $t \in \R$. In particular, for any $\delta > 0$, there exists a $(kp,a)$-generic potential $b'$ with $\|b-b'\|_\infty < \delta$, so the generic potentials are dense in the space of periodic potentials.
\end{lemma}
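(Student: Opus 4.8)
The plan is to reduce $(kp,a)$-genericity to a statement about the period-$kp$ monodromy matrices $\Phi_{E,t}$ of $b_t$ and then to exploit that the single-site perturbation enters affinely and degenerately. By Theorem~\ref{t:floq}, $J_{a,b_t}$ is $(kp,a)$-generic precisely when $\sigma(J_{a,b_t})$ has $kp$ connected components, and this fails exactly when two consecutive band edges coincide, i.e.\ when some solution of $D_t(E)=\pm 2$ is double, which by Theorem~\ref{t:floq} happens if and only if there is an energy $E$ with $\Phi_{E,t}=\pm I$. So it suffices to show that the set of $t$ admitting such an $E$ is finite. Writing $R_E=T_E(kp-1)\cdots T_E(1)$ for the partial transfer over the first $kp-1$ sites (which is independent of $t$, since $b_t$ differs from $b$ only at the site $kp$), and using $T_{E,t}(kp)=T_E(kp)-\tfrac{t}{a(kp)}M$ with $M=\left(\begin{smallmatrix}1&0\\0&0\end{smallmatrix}\right)$, I obtain the affine-in-$t$ identity
\[
\Phi_{E,t}=\Phi_{E,0}-\frac{t}{a(kp)}\,MR_E .
\]

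Since $MR_E$ has vanishing second row, the key structural point is that the bottom row of $\Phi_{E,t}$ does not depend on $t$; in particular $(\Phi_{E,t})_{21}=(\Phi_{E,0})_{21}$, so $\Phi_{E,t}=\pm I$ forces $(\Phi_{E,0})_{21}=0$. A direct transfer-matrix computation gives $(\Phi_{E,0})_{21}=a(kp)\,(R_E)_{11}=a(kp)\,u(kp)$, where $u$ solves \eqref{eq:eveq} at $t=0$ with $u(0)=0$, $u(1)=1$; the recurrence shows $u(kp)$ is a polynomial in $E$ of degree $kp-1$ with leading coefficient $\big(\prod_{n=1}^{kp-1}a(n)\big)^{-1}\neq 0$. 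Hence every admissible energy lies in the finite zero set $Z:=\{E:(\Phi_{E,0})_{21}=0\}$, which has at most $kp-1$ elements. Note that this step, and indeed the whole argument, never uses genericity of the base potential $b$.

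The crux of the argument, and the step I expect to be the main obstacle, is ruling out the degenerate scenario in which a gap stays closed for \emph{every} $t$: a priori the $t$-dependence of $\Phi_{E,t}$ could collapse at one of the finitely many bad energies. I would resolve this as follows. Fix $E_0\in Z$; then $(R_{E_0})_{11}=0$, and since $\det R_{E_0}=1$ forces $(R_{E_0})_{12}(R_{E_0})_{21}=-1$, we get $(R_{E_0})_{12}\neq 0$. Consequently the off-diagonal entry $(\Phi_{E_0,t})_{12}=(\Phi_{E_0,0})_{12}-\tfrac{t}{a(kp)}(R_{E_0})_{12}$ is a genuinely nonconstant affine function of $t$, so it vanishes for exactly one value $t_{E_0}$. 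Since $\Phi_{E_0,t}=\pm I$ requires in particular this entry to vanish, at most one $t$ is bad for each $E_0\in Z$, and therefore the full bad set is contained in the finite set $\{t_{E_0}:E_0\in Z\}$. This proves that $b_t$ is $(kp,a)$-generic for all but finitely many $t$.

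The ``in particular'' clause then follows at once: given $\delta>0$, the interval $(-\delta,\delta)$ is infinite while the bad set is finite, so one may choose $t$ with $|t|<\delta$ for which $b':=b_t$ is $(kp,a)$-generic; since $b$ and $b_t$ differ only at the sites congruent to $0$ modulo $kp$, one has $\|b-b'\|_\infty=|t|<\delta$. Because the computation above applies verbatim to an arbitrary $q$-periodic potential with the perturbation placed on one full period (genericity of the starting potential was never used), one gets a $q$-generic potential within $\delta$ of any periodic potential, which yields density of the generic potentials in the space of periodic potentials.
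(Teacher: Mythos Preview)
Your argument is correct and shares the overall architecture with the paper's proof: both use Theorem~\ref{t:floq} to translate a closed gap into $\Phi_{E,t}=\pm I$, exploit the affine dependence of $\Phi_{E,t}$ on $t$ coming from the rank-one perturbation, and then show that the bad energies lie in a finite set with at most one bad $t$ per energy.

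The point of departure is how the finite energy set is identified. The paper computes the \emph{unperturbed} period-$kp$ transfer matrix at a bad energy, obtaining $A_{E_t}^{(a,b)}(kp)=\pm\left(\begin{smallmatrix}1&t\\0&1\end{smallmatrix}\right)$; since $b$ is $p$-periodic this is the $k$th power of $A_{E_t}^{(a,b)}(p)$, which forces the latter to be upper triangular with $\pm 1$ on the diagonal, so $D(E_t)=\pm 2$ for the period-$p$ discriminant. This gives at most $2p$ bad energies, and the explicit upper-right entry $t$ yields injectivity of $t\mapsto E_t$. You instead observe directly that the $(2,1)$ entry of $\Phi_{E,t}$ is $t$-independent and equals a degree-$(kp-1)$ polynomial in $E$, giving at most $kp-1$ candidate energies; for each you use $(R_{E_0})_{12}\neq 0$ (from $\det R_{E_0}=1$ and $(R_{E_0})_{11}=0$) to see the $(1,2)$ entry is genuinely affine in $t$. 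The paper's route yields the sharper bound $\#\{\text{bad }t\}\le 2p+1$ and ties the picture to the period-$p$ Floquet data; your route is more elementary, never invokes the $k$th-power identity, and --- as you correctly note --- makes no use of $(p,a)$-genericity or even $p$-periodicity of $b$, so the density clause follows without any bootstrap.
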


\begin{proof}
If $\sigma(J_{a,b_t})$ has a closed gap at energy $E_t \in \R$, then it follows from Theorem~\ref{t:floq} that the matrix $A_{E_t}^{(a,b_t)}(kp)$ must be equal to $\pm I$. In particular, examining the unperturbed transfer matrices, we have
\begin{align*}
A_{E_t}^{(a,b)}(kp)
& =
\frac{1}{a(kp)}
\begin{pmatrix}
E_t - b(kp)& -1 \\
a(kp)^2 & 0
\end{pmatrix}
\cdot
A_{E_t}^{(a,b)}(kp-1)\\
& =
\begin{pmatrix}
E_t -  b(kp) & -1 \\
a(kp)^2 & 0
\end{pmatrix}
\begin{pmatrix}
E_t -  b(kp) - t  & -1 \\
a(kp)^2 & 0
\end{pmatrix}^{-1}
\cdot
A_{E_t}^{(a,b_t)}(kp)\\
& = 
\pm 
\begin{pmatrix}
1 &  t \\
0 & 1
\end{pmatrix}.
\end{align*}
In particular, if $t \neq t'$, this forces $A_{E_t}^{(a,b)}(kp) \neq A_{E_{t'}}^{(a,b)}(kp)$ and hence $E_t \neq E_{t'}$. Moreover, if $t \neq 0$, then the above implies that
$$
A_{E_t}^{(a,b)}(p)
=
\begin{pmatrix} \pm 1 & \ast \\ 0 & \pm 1 \end{pmatrix},
$$ 
which implies that the discriminant of $J_{a,b}$ is $\pm 2$ at $E_t$. Since the discriminant of $J_{a,b}$ is a polynomial of degree $p$ in $E$, there can be at most $2p$ distinct values of $E$ for which it attains the values $\pm 2$, and hence the lemma follows.
\end{proof}

Let us set up a bit of terminology. The idea here is the following: we start with a $(p,a)$-generic potential $b$ and then perform a small perturbation of $b$ to produce a $(kp,a)$-generic potential $b'$. Of course, if the perturbation is small enough, then the spectrum of $J_{a,b'}$ will inherit $p-1$ gaps from the spectrum of $J_{a,b}$ and will produce $(k-1)p$ new gaps. We want to control the locations at which these new gaps form. From the point of view of logarithmic potential theory, it is natural to partition $\sigma(J_{a,b})$ into $kp$ subintervals, each of which has harmonic measure $\frac{1}{kp}$; compare \cite[Section~5.5]{simszego}. The following definition precisely describes the endpoints of these subintervals.

\begin{defi}
Let $J=J_{a,b}$ be a periodic Jacobi matrix with corresponding discriminant $D$ (recall that we have fixed the periodic background off-diagonal sequence $a$). We say that $E \in \R$ is a $k$-\emph{break point} of $J$ if it satsifies
\begin{equation} \label{eq:breakpt:def}
D(E)
=
2\cos\left(\frac{\pi j}{k}\right)
\,
\text{ for some integer } 0 \leq j \leq k.
\end{equation}
We say that $E$ is a \emph{proper} break point if $1 \leq j \leq k-1$; equivalently, the improper break points of $J$ are simply the edges of bands of the spectrum of $J$.
\end{defi}

By Theorem~\ref{t:floq}, a $p$-periodic Jacobi operator will have precisely $(k-1)p$ \emph{proper} $k$-break points. It is not hard to see that every solution of \eqref{eq:eveq} is $kp$-periodic whenever $E$ is a proper $k$-break point of $J$ with $j$ even; similarly, every solution of \eqref{eq:eveq} is $kp$-antiperiodic whenever $E$ is a proper $k$-break point of $J$ with odd $j$. In particular, the $k$-break points of $J$ are precisely the eigenvalues of $J$ restricted to $[1,kp]$ with periodic or antiperiodic boundary conditions. More precisely,  the set of $k$-break points of $J$ is precisely the set of eigenvalues of $J_{a,b}^{kp,\pm}$ (where $J_{a,b}^{kp,\pm}$ is defined as in \eqref{eq:bprob}). Moreover, from the discussion above, it is easy to see that any proper $k$-break point of $J$ is a doubly degenerate eigenvalue of one of $J_{a,b}^{kp,\pm}$. 

As the name suggests, when we perturb $b$ slightly to produce $b'$, then the $(k-1)p$ new small gaps form near the proper break points, provided $b'$ is sufficiently close to $b$. This is a relatively straightforward consequence of standard eigenvalue perturbation theory, since the band edges of $\sigma(J_{a,b'})$ are precisely the eigenvalues of $J_{a,b'}^{kp,\pm}$ by Theorem~\ref{t:floq}. The precise statement follows.

\begin{lemma} \label{l:breakpts}
Suppose $J = J_{a,b}$ is a $p$-generic Jacobi operator and $k \geq 2$. If $\varepsilon > 0$ is sufficiently small, $b'$ is $(kp,a)$-generic, and
$$
\|b-b'\|_\infty < \varepsilon,
$$
then, for each proper $k$-break point $E$ of $J$, there exists a gap of $\sigma(J_{a,b'})$ entirely contained within $B_\varepsilon(E)$. Each of the remaining $p - 1$ gaps of $\sigma(J_{a,b'})$ is contained in a $\varepsilon$-neighborhood of a gap of $\sigma(J)$. Indeed, this conclusion holds as soon as $\varepsilon$ is less than one-half the minimum distance between distinct $k$-break points of $J$.
\end{lemma}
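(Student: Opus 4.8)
The plan is to convert the statement into eigenvalue perturbation theory for the finite matrices $J_{a,b}^{kp,\pm}$ and $J_{a,b'}^{kp,\pm}$ from \eqref{eq:bprob}, using the dictionary furnished by Theorem~\ref{t:floq}: the band edges of $\sigma(J_{a,b'})$ are exactly the eigenvalues of $J_{a,b'}^{kp,\pm}$, while (as recorded in the discussion preceding the lemma) the $k$-break points of $J$ are exactly the eigenvalues of $J_{a,b}^{kp,\pm}$, each proper break point appearing as a doubly degenerate eigenvalue of exactly one of $J_{a,b}^{kp,+}, J_{a,b}^{kp,-}$ and each band edge of $\sigma(J)$ appearing as a simple eigenvalue (the latter following from $p$-genericity and a count of multiplicities, since every eigenvalue of $J_{a,b}^{kp,\pm}$ has multiplicity at most two). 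First I would note that, because $a$ is held fixed, $J_{a,b}^{kp,\pm}$ and $J_{a,b'}^{kp,\pm}$ differ only along the diagonal, by the entries $b(n)-b'(n)$, so their difference is a diagonal matrix of operator norm at most $\|b-b'\|_\infty < \varepsilon$. Applying Weyl's eigenvalue perturbation inequality (equivalently, the min--max principle) separately to the $+$ and the $-$ boundary conditions then shows that the $i$th eigenvalue of $J_{a,b'}^{kp,\pm}$ lies within $\varepsilon$ of the $i$th eigenvalue of $J_{a,b}^{kp,\pm}$; hence every band edge of $\sigma(J_{a,b'})$ lies within $\varepsilon$ of some $k$-break point of $J$, and this matching respects multiplicities within each boundary condition.

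Next I would invoke the hypothesis that $\varepsilon$ is less than half the minimal distance between distinct $k$-break points in order to separate the resulting clusters. Distinct break points then have disjoint $\varepsilon$-neighborhoods, and since the eigenvalues of $J_{a,b'}^{kp,+}$ can cluster only near break points carrying a nontrivial $kp$-periodic solution (and likewise, with antiperiodic solutions, for $J_{a,b'}^{kp,-}$), each proper break point $E$ collects exactly two band edges of $\sigma(J_{a,b'})$ inside $B_\varepsilon(E)$, namely the splitting of its double eigenvalue, while each band edge of $\sigma(J)$ collects exactly one. A count confirms the bookkeeping: since $J_{a,b'}$ is $(kp,a)$-generic, $\sigma(J_{a,b'})$ has $2kp$ distinct band edges, matching the $2(k-1)p$ from the $(k-1)p$ proper break points plus the $2p$ from the band edges of $\sigma(J)$.

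The decisive step is to show that each pair produced at a proper break point bounds a \emph{gap} rather than a band, and that the remaining edges pair up across the gaps of $\sigma(J)$. Here I would extract from the explicit eigenvalue ordering in Theorem~\ref{t:floq} the structural fact that two consecutive band edges of the $kp$-periodic spectrum bound a gap precisely when they are eigenvalues of the \emph{same} boundary condition, and bound a band precisely when they are eigenvalues of \emph{opposite} boundary conditions. The two band edges splitting off a proper break point are eigenvalues of the same (periodic or antiperiodic) problem, and by the disjoint-neighborhood argument no eigenvalue of the opposite boundary condition lies between them, so they are \emph{consecutive} band edges of $\sigma(J_{a,b'})$; the structural fact then forces them to bound a gap, which is contained in $B_\varepsilon(E)$ because both endpoints lie there. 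For the remaining $p-1$ gaps, the two edges bounding a gap $(\beta_j,\alpha_{j+1})$ of $\sigma(J)$ are themselves eigenvalues of a common boundary condition, so their perturbations are consecutive same-type band edges of $\sigma(J_{a,b'})$ and again bound a gap, trapped in the $\varepsilon$-neighborhood of $(\beta_j,\alpha_{j+1})$ since each endpoint moves by less than $\varepsilon$.

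I expect the main obstacle to be precisely this last combinatorial step. The analytic perturbation bounds are routine, but correctly deciding which perturbed eigenvalues bound gaps as opposed to bands requires the boundary-condition parity structure of Floquet theory, together with the care—supplied by the per-boundary-condition min--max matching and the separation hypothesis—needed to guarantee that the relevant pairs are genuinely consecutive band edges, with no intruder from the opposite boundary condition slipping in between.
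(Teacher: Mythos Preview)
Your proposal is correct and follows essentially the same route as the paper: reduce to eigenvalue perturbation for the finite matrices $J_{a,b}^{kp,\pm}$ versus $J_{a,b'}^{kp,\pm}$, use the separation hypothesis on $\varepsilon$ to keep the clusters disjoint, and then read off the gap/band structure from Theorem~\ref{t:floq}. The paper invokes Proposition~\ref{p:specdist} rather than Weyl/min--max and dispatches the combinatorial bookkeeping with ``the lemma follows easily,'' whereas you spell it out via the same-versus-opposite boundary-condition parity; this is just a more explicit rendering of the same argument, not a different one.
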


\noindent \textit{Remark.} If one does not assume that $b'$ is $(kp,a)$-generic, the proof still provides useful information about the structure of $\sigma(J_{a,b'})$. Specifically, if $b'$ is simply $kp$-periodic and $\|b - b'\| < \delta$, then the proof shows that $b'$ has at least $p-1$ gaps, each of which is contained in an $\varepsilon$-neighborhood of a gap of $\sigma(J)$, while any other gaps of $\sigma(J_{a,b'})$ must form in $\varepsilon$-neighborhoods of proper break points, with at most one new gap in each $\varepsilon$-neighborhood of a proper break point of $J$.
\newline

\begin{proof}[Proof of Lemma~\ref{l:breakpts}]

Let $\varepsilon > 0$ be given as in the statement of the lemma. Notice that the condition on $\varepsilon$ means that the $\varepsilon$-neighborhoods of the $k$-break points (of $J$) are pairwise disjoint. Now, suppose that $b'$ is $(kp,a)$-generic and satisfies $\|b-b'\|_\infty <\varepsilon$. Applying Proposition~\ref{p:specdist} to $J_{a,b}^{kp,+}$ and $J_{a,b'}^{kp,+}$ (resp., to $J_{a,b}^{kp,-}$ and $J_{a,b'}^{kp,-}$), we see that each eigenvalue of $J_{a,b'}^{kp,\pm}$ must be within $\varepsilon$ of an eigenvalue of $J_{a,b}^{kp,\pm}$. Using this, disjointness of the $\varepsilon$ neighborhoods of eigenvalues of $J_{a,b}^{kp,\pm}$, and Theorem~\ref{t:floq}, the lemma follows easily.

\end{proof}

\section{Proofs of Theorems~\ref{t:jacobi:homog} and \ref{t:jacobi:cantgroup:homog}} \label{sec:proof}

\begin{proof}[Proof of Theorem~\ref{t:jacobi:homog}]
Fix a $(p_0,a)$-generic potential $b_0$ and constants $\tau < 1$, $\varepsilon > 0$. By Lemma~\ref{l:gen:dense}, the generic potentials are dense in $\mathcal L$, so, to prove the theorem, it suffices to construct an element of $\mathcal H_\tau^a$ in $B_\varepsilon(b_0)$. To that end,  let $\lambda_0$ be the minimal length of a band of $\sigma_0 := \sigma(J_{a,b_0})$, and let $\gamma_0$ be the minimal length of a gap of $\sigma_0$. Put $\varepsilon_0 = \min(\gamma_0, 4\varepsilon)$ and fix a sequence\footnote{For this theorem, it is not necessary to use an arbitrary sequence of $k$'s. However, we will need this freedom to prove Theorem~\ref{t:jacobi:cantgroup:homog}, since not all periodic potentials fiber over an arbitrary Cantor group.} of integers $k_1, \, k_2,\ldots\geq 2$. The $k$'s will control the periods of approximants viz.\ $p_n = k_n p_{n-1}$ for each $n \in \Z_+$. Now, let $t_0$ be the minimal distance between consecutive $k_1$-break points of $J_{a,b_0}$ and choose a sequence $r_1 > r_2 > \cdots  > 0$ so that
\begin{equation} \label{eq:r:conds}
\sum_{\ell=1}^\infty r_\ell < 1 - \tau.
\end{equation}

Now, we will inductively choose a sequence of positive numbers $(\varepsilon_j)_{j=1}^\infty$ and a sequence of potentials $(b_j)_{j=1}^\infty$ in such a way that the bands of $\sigma(J_{a,b_j})$ are very long relative to the spectral gaps which are introduced in passing from $b_{j-1}$ to $b_j$. First, for the sake of notation, denote by $\lambda_j$ the length of the shortest band of $\sigma_j = \sigma(J_{a,b_j})$, let $\gamma_j$  be the minimal gap length of $\sigma_j$, and denote by $t_j$ the minimal distance between consecutive $k_{j+1}$-break points of $J_{a,b_j}$. In this notation, we may choose the sequences $(\varepsilon_j)_{j=1}^\infty$ and $(b_j)_{j=1}^\infty$ so that the following properties hold:
\begin{itemize}
\item One has
\begin{equation}\label{eq:fastconv}
\varepsilon_j
<
\min\left( 
\frac{\gamma_{j-1}}{5}, 
\frac{\varepsilon_{j-1}}{5},
\frac{t_{j-1}}{2}, 
\frac{r_j t_{j-1}}{2r_j + 5},
e^{-j \cdot p_{j+1}} \right)
\end{equation}
for all $j \geq 1$.
\item For every $j \in \Z_+$, $b_j$ is $(p_j,a)$-generic.
\item For all $j$, $\|b_j - b_{j-1}\|_\infty < \varepsilon_j$.
\end{itemize}

We begin by noticing several consequences of these conditions. First, the condition $\varepsilon_j < t_{j-1} /2$ means that the conclusion of Lemma~\ref{l:breakpts} holds with $b = b_{j-1}$, $b' = b_j$, and $\varepsilon = \varepsilon_j$. More precisely, $p_{j-1} - 1$ gaps of $\sigma_j$ are contained in $\varepsilon_j$-neighborhoods of gaps of $\sigma_{j-1}$, each $\varepsilon_j$-neighborhood of a proper $k_j$-break point of $J_{a,b_{j-1}}$ contains exactly one gap of $\sigma_j$, and this is an exhaustive list of all gaps of $\sigma_j$.

As a consequence of the preceding paragraph, the assumptions on $b_j$, and Lemma~\ref{l:breakpts}, we get $\lambda_j \geq t_{j-1} - 2\varepsilon_j$ for each $j \geq 1$. In particular, using this and the fourth condition in \eqref{eq:fastconv}, we obtain
\begin{equation} \label{eq:fastconv2}
\varepsilon_j
\leq 
\frac{\varepsilon_j \lambda_j}{t_{j-1} - 2\varepsilon_j}
<
\frac{\lambda_j r_j}{5}
\end{equation}
for all $j \in \Z_+$.
\newline

To establish the desired spectral homogeneity, we will prove the estimate
\begin{equation} \label{eq:step:homog}
|B_\delta(E) \cap \sigma_n |
\geq
\delta\left( 1 - \sum_{\ell=1}^n r_\ell \right)
\text{ for all }
0 < \delta \leq \lambda_0, \,
E \in \sigma_n
\end{equation}
for all $n \in \Z_+$. Fix $n \in \Z_+$, $E \in \sigma_n$, and $0 < \delta \leq \lambda_0$. If $\delta \leq \lambda_{n}$, then $B_\delta(E)$ contains a subinterval of length $\delta$ which is completely contained in $\sigma_n$, which implies
$$
|B_\delta(E) \cap \sigma_n|
\geq
\delta.
$$
Next, assume that $\lambda_j < \delta \leq \lambda_{j - 1}$ for some $1 \leq j \leq n$. By Proposition~\ref{p:specdist}, there exists $E_0 \in \sigma_{j-1}$ with $|E - E_0| \leq \varepsilon_{j,n} := \varepsilon_j + \cdots + \varepsilon_{n}$. The key inequality in this step is
\begin{equation} \label{eq:key:ineq}
\varepsilon_{j,n} +  \sum_{\ell = j}^{n} 2 \varepsilon_\ell \left( \frac{\delta}{\lambda_\ell} +1 \right)
<
\delta \sum_{\ell = j}^{n}  r_\ell,
\end{equation}
which follows from \eqref{eq:fastconv2}, and $\delta > \lambda_j$ (hence $\delta > \lambda_\ell$ for every $\ell \geq j$). It is easy to see that there exists an interval $I_0$ of length $\delta - \varepsilon_{j,n}$ which contains $E_0$ such that $I_0 \subseteq \sigma_{j-1} \cap B_\delta(E)$. Consequently, we have the following estimates:
\begin{align*}
|B_\delta(E) \cap \sigma_{n}|
& \geq 
\left|I_0 \cap \sigma_{n} \right| \\
& \geq
\left|I_0  \cap \sigma_{j-1} \right|
-
\sum_{\ell = j}^{n} \left| I_0  \cap (\sigma_{\ell - 1} \setminus \sigma_\ell) \right| \\
& = 
(\delta - \varepsilon_{j,n}) - \sum_{\ell = j}^{n} \left| I_0  \cap (\sigma_{\ell - 1} \setminus \sigma_\ell) \right|.
\end{align*}
The third line uses $I_0 \subseteq \sigma_{j-1}$. Obviously, $I_0$ completely contains fewer than $\delta/\lambda_\ell$ bands of $\sigma_\ell$ for each $j \leq \ell \leq n$, so, by Proposition~\ref{p:specdist} and Lemma~\ref{l:breakpts}, we have
\begin{align*}
(\delta - \varepsilon_{j,n}) - \sum_{\ell = j}^{n} \left| I_0  \cap (\sigma_{\ell - 1} \setminus \sigma_\ell) \right|
& \geq
(\delta - \varepsilon_{j,n})
-
 \sum_{\ell = j}^{n} 2 \varepsilon_\ell \left( \frac{\delta}{\lambda_\ell} + 1 \right) \\
& >
\delta\left(1 - \sum_{\ell = j}^{n} r_\ell \right) \\
& \geq 
\delta\left(1 - \sum_{\ell = 1}^{n} r_\ell \right),
\end{align*}
where the penultimate line follows from \eqref{eq:key:ineq}. Thus, \eqref{eq:step:homog} holds for every $n \in \Z_+$. Consequently, by \eqref{eq:fastconv} and our choice of $\varepsilon_0$, we have a limiting potential $b_\infty := \lim b_n$ with
$$
\| b_0 - b_\infty \|_\infty 
<
\sum_{\ell=1}^\infty \varepsilon_\ell 
<
\varepsilon_0 \sum_{\ell = 1}^\infty 5^{-\ell}
=
\frac{\varepsilon_0}{4}
\leq
\varepsilon.
$$
Moreover, with $\sigma_\infty := \sigma(J_{a,b_\infty})$, we have
$$
|B_\delta(E) \cap \sigma_\infty|
\geq 
\delta\left(1 - \sum_{\ell=1}^\infty r_\ell\right)
>
\delta \tau
$$
for all $E \in \sigma_\infty$ and $0 < \delta \leq \lambda_0$ by \eqref{eq:step:homog} and Propositions~\ref{p:lebmsr:semicont} and \ref{p:specdist}. Thus, $\sigma_\infty$ is $\tau$-homogeneous.
\newline

To see that $\sigma_\infty$ is a Cantor set, it suffices to check that it is nowhere dense, since it cannot have isolated points by general principles \cite[Theorem~1]{pastur80}. To that end, let $U \subseteq \R$ be an open interval, and choose $n$ so that $4 \pi A^2/p_n < |U|$, where $A = \max(1, \|a\|_\infty)$. By Theorem~\ref{l.avila.lp1}, $U$ must contain an open subinterval $G$ of length $\gamma_n$ with $G \cap \sigma_n = \emptyset$, since $b_n$ is $(p_n,a)$-generic. Notice that \eqref{eq:fastconv} implies that
$$
\|J_{a,b_n} - J_{a,b_\infty} \|
=
\| b_n - b_\infty \|_\infty
<
\sum_{\ell=n+1}^\infty \varepsilon_\ell
<
\gamma_n \sum_{k=1}^\infty 5^{-k}
=
\frac{\gamma_n}{4}.
$$
Consequently, if $c$ denotes the center of $G$, then $c \notin \sigma_\infty$ by Proposition~\ref{p:specdist}. Thus, $\sigma_\infty$ is nowhere dense, as desired.
\newline

Finally, purely absolutely continuous spectrum is an immediate consequence of \eqref{eq:fastconv} and a discrete analog of the theorem of Pastur-Tkachenko due to Egorova \cite{egorova}. Specifically, \eqref{eq:fastconv} implies that
$$
\| J_{a,b_\infty} - J_{a,b_n} \|
\leq
e^{-n \cdot p_{n+1}} \cdot \sum_{j=1}^\infty 5^{-j}
<
e^{-n p_{n+1}}.
$$
Consequently, one obtains
$$
\lim_{n \to \infty} e^{\tilde C p_{n+1}} \| J_{a,b_\infty} - J_{a,b_n} \|
=
0
$$
for every $\tilde C > 0$, which implies that $J_{a,b_\infty}$ has purely absolutely continuous spectrum  by \cite{egorova}.
\end{proof}

To prove Theorem~\ref{t:jacobi:cantgroup:homog}, we need to introduce some more machinery. For the remainder of the section, assume that $\Omega$ is a fixed monothetic Cantor group with topological generator $\theta$. We say that $f \in C(\Omega,\R)$ is a $p$-periodic sampling function if $f\circ T^p = f$, where $T:x \mapsto x+\theta$. This is obviously equivalent to the statement that $s_\omega^f$ is a $p$-periodic sequence for every $\omega \in \Omega$, where $s_\omega^f$ is defined as in \eqref{eq:lp:cantdef}. Since $\Omega$ is profinite and monothetic, there exists a sequence $\Omega_1 \supseteq \Omega_2 \supseteq \cdots$ of compact finite-index subgroups of $\Omega$ with the property that
$$
\bigcap_{j=1}^\infty \Omega_j
=
\{0\}.
$$
Let $n_j$ denote the index of $\Omega_j$ in $\Omega$. The following proposition is not hard to prove; compare \cite[Section~3]{avila}.

\begin{prop} \label{p:per:sf}
Let $f \in C(\Omega,\R)$. Then $f$ is an $n_j$-periodic sampling function if and only if it descends to a well-defined function on the quotient $\Omega/\Omega_j$. Moreover, any periodic sampling function is defined over some quotient of the form $\Omega/\Omega_j$ with $j \geq 1$. Consequently, if $b$ is a periodic sequence with period $p$ which divides $n_j$ for some $j$, then $b = b_0^f$ for suitable $f \in C(\Omega,\R)$, where $b_0^f(n) = f(n \theta)$, as usual.
\end{prop}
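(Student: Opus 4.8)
The plan is to route everything through the \emph{stabilizer subgroup}
$$
\Gamma_f = \{ h \in \Omega : f(\cdot + h) = f \},
$$
which is a closed subgroup of $\Omega$: it is closed because the translation action $h \mapsto f(\cdot + h)$ is continuous into $C(\Omega)$ (use uniform continuity of $f$ on the compact group $\Omega$), and it is a subgroup by a one-line computation. The point is that $f \circ T^{n_j} = f$ says exactly $n_j\theta \in \Gamma_f$, while ``$f$ descends to $\Omega/\Omega_j$'' means exactly $\Omega_j \subseteq \Gamma_f$. So the entire proposition reduces to understanding the closed subgroup generated by $n_j\theta$ and to relating arbitrary open subgroups to the $\Omega_j$.

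The key structural fact I would establish first is that $\Omega_j$ is monothetic with topological generator $n_j\theta$, i.e. $\overline{\langle n_j\theta\rangle} = \Omega_j$. Since $\Omega_j$ is closed of finite index $n_j$, it is open, so $\Omega/\Omega_j$ is finite and discrete; the image of the dense subgroup $\langle\theta\rangle$ is then all of $\Omega/\Omega_j$, forcing $\Omega/\Omega_j$ to be cyclic of order $n_j$ generated by the class $\bar\theta$ of $\theta$. In particular $n_j\theta \in \Omega_j$. For density of $\langle n_j\theta\rangle$ in $\Omega_j$, take $h \in \Omega_j$ and a neighborhood $U \subseteq \Omega_j$ of $h$ (available since $\Omega_j$ is open); density of $\langle\theta\rangle$ gives $k$ with $k\theta \in U \subseteq \Omega_j$, whence $k\bar\theta = 0$ and thus $n_j \mid k$, so $k\theta \in \langle n_j\theta\rangle$.

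Granting this, the biconditional is immediate: if $f$ descends to $\Omega/\Omega_j$ then $n_j\theta \in \Omega_j \subseteq \Gamma_f$, so $f \circ T^{n_j} = f$; conversely $n_j\theta \in \Gamma_f$ together with closedness of $\Gamma_f$ and the structural fact yields $\Omega_j = \overline{\langle n_j\theta\rangle} \subseteq \Gamma_f$, i.e. $f$ descends. For the ``moreover'' clause, given $f \circ T^p = f$, put $H = \overline{\langle p\theta\rangle} \subseteq \Gamma_f$; the same reasoning shows $\Omega/H$ is finite cyclic of order dividing $p$, so $H$ is open. Since the compact sets $\Omega_j \setminus H$ decrease with $\bigcap_j(\Omega_j\setminus H) = (\bigcap_j \Omega_j)\setminus H = \emptyset$ (because $0 \in H$), the finite intersection property gives some $j$ with $\Omega_j \subseteq H \subseteq \Gamma_f$, so $f$ is defined over $\Omega/\Omega_j$. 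Finally, for the ``consequently'' clause, a $p$-periodic $b$ with $p \mid n_j$ is $n_j$-periodic; using $\Omega/\Omega_j \cong \Z/n_j\Z$ via $\bar\theta \mapsto 1$, define $F(m\bar\theta) = b(m)$ (well-defined by $n_j$-periodicity and $\mathrm{ord}(\bar\theta) = n_j$) and set $f = F\circ\pi$ with $\pi:\Omega \to \Omega/\Omega_j$ the quotient map; then $f \in C(\Omega,\R)$ and $f(n\theta) = b(n)$, i.e. $b = b_0^f$.

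The only genuine obstacle is the structural fact $\overline{\langle n_j\theta\rangle} = \Omega_j$, where topological generation of $\theta$ must be used in an essential way (and where openness of the finite-index $\Omega_j$ is what makes the ``small neighborhood inside $\Omega_j$'' argument work); once this is in hand, the biconditional, the factorization clause, and the realization clause are all formal consequences of the stabilizer formalism and compactness.
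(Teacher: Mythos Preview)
Your argument is correct and complete. The paper itself does not supply a proof of this proposition; it simply remarks that it ``is not hard to prove'' and points the reader to \cite[Section~3]{avila}. So there is no detailed comparison to make at the level of technique: you have filled in what the paper omits.

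That said, your organization around the stabilizer $\Gamma_f$ and the structural identity $\overline{\langle n_j\theta\rangle} = \Omega_j$ is exactly the clean way to do this, and it matches the spirit of the profinite discussion in \cite{avila}. The density argument for $\langle n_j\theta\rangle$ in $\Omega_j$ (using openness of $\Omega_j$ to trap an approximating multiple of $\theta$ inside $\Omega_j$, then reading off $n_j \mid k$ from the cyclic quotient) is the essential input, and you have isolated it correctly. For the ``moreover'' clause, the step showing $H = \overline{\langle p\theta\rangle}$ is open deserves one more word: you use that the image of $\langle\theta\rangle$ in the Hausdorff quotient $\Omega/H$ is a finite (since $p\bar\theta = 0$) dense subgroup, hence closed, hence all of $\Omega/H$; this is implicit in ``the same reasoning'' but worth stating. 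The compactness/FIP argument that then produces $\Omega_j \subseteq H$ is standard and correct. The realization of a given $p$-periodic $b$ via $\Omega/\Omega_j \cong \Z/n_j\Z$ is exactly right.
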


\begin{proof}[Proof of Theorem~\ref{t:jacobi:cantgroup:homog}]

Suppose $\varepsilon > 0$ and $f$ is an $(n_q,a)$-generic sampling function for some $q \geq 1$. As before, the generic sampling functions are dense, so it suffices to find an element of $\mathcal H_\tau^a$ in $B_\varepsilon(f)$. Let $b_0 = s_0^f$ as in \eqref{eq:lp:cantdef}. Define $k_j = n_{q+j}/n_{q+j-1}$ so that $p_j = n_{q+j}$, and choose $p_j$-generic potentials $b_j$ exactly as in the proof of Theorem~\ref{t:jacobi:homog}. In particular, $b_\infty = \lim b_j$ is such that $J_{a,b_\infty}$ has all of the desired properties. By Proposition~\ref{p:per:sf}, there exist $f_j \in C(\Omega,\R)$ such that $b_j = b_0^{f_j}$ for each $j$. It is not hard to see that $f_\infty = \lim f_j$ exists and that $b_0^{f_\infty} = b_\infty$, so the theorem is proved.
\end{proof}

\section{The CMV Case}\label{sec:cmvproofs}

In this section, we discuss the modifications to the proofs of Theorems~\ref{t:jacobi:homog} and \ref{t:jacobi:cantgroup:homog} necessary to obtain Theorems~\ref{t:homog:cmv} and \ref{t:homog:cmv:cantgroup}. In essence, no extra work is needed -- one simply needs to find suitable replacements for the various pieces which comprise the proofs and then re-run the entire machine. 

First, we replace Lebesgue measure on $\R$ with arc-length measure on $\partial \D$, that is, the pushforward of Lebesgue measure on $[0,2\pi)$ under the map $t \mapsto \exp(it)$. Equivalently, arc-length measure on $\partial \D$ can simply be thought of as one-dimensional Hausdorff measure. Clearly, there is a version of the Hausdorff metric for compact subsets of $\partial \D$, also defined by the formula \eqref{eq:hdmetric:def}. Here, the $\varepsilon$-neighborhoods of sets should of course be thought of as $\varepsilon$-neighborhoods with respect to the usual metric on $\C$. It is then trivial to modify Propositions~\ref{p:lebmsr:semicont} and \ref{p:specdist} to fit this setting. The precise statements follow.

\begin{prop}
If $(A_n)_{n=1}^\infty$ and $(B_n)_{n=1}^\infty$ are sequences of compact subsets of $\partial \D$ such that $A_n \to A$ and $B_n \to B$ with respect to the Hausdorff metric, then
$$
|A \cap B| \geq \limsup_{n \to \infty} |A_n \cap B_n|,
$$
where $| \cdot |$ denotes arc-length measure on $\partial \D$.
\end{prop}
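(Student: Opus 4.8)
The plan is to adapt the proof of Proposition~\ref{p:lebmsr:semicont} verbatim, replacing Lebesgue measure on $\R$ with arc-length measure on $\partial \D$ and open intervals in $\R$ with open arcs in $\partial \D$. The only genuine content of the original argument is a covering estimate together with the observation that the intersection $A_n \cap B_n$ eventually sits inside a small neighborhood of $A \cap B$; both of these survive the passage to the circle without difficulty, since arc-length measure is a Borel measure that is outer regular and behaves the same way under taking small neighborhoods of finite unions of arcs.

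First I would fix $\varepsilon > 0$ and invoke compactness of $A \cap B$ to choose finitely many open arcs $I_1, \ldots, I_m$ whose union $O = \bigcup_{j=1}^m I_j$ contains $A \cap B$ and satisfies $|O| < |A \cap B| + \varepsilon/2$, where $|\cdot|$ now denotes arc-length measure. The one point requiring a moment's thought is that $O$ should be understood as a union of \emph{arcs} (connected open subsets of $\partial \D$) rather than intervals, but since $\partial \D$ is locally isometric to $\R$, this changes nothing in the covering step. Next I would set $\delta = \varepsilon/(4m)$ and argue, exactly as before, that $A_n \cap B_n \subseteq B_\delta(A \cap B)$ for all sufficiently large $n$; here $B_\delta$ is the $\delta$-neighborhood taken with respect to the ambient metric on $\C$, as stipulated in the discussion preceding the proposition. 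This inclusion is a direct consequence of Hausdorff convergence $A_n \to A$ and $B_n \to B$: any point of $A_n \cap B_n$ lies within $\varepsilon'$ of both $A$ and $B$ for small $\varepsilon'$, and a standard compactness argument forces it near $A \cap B$.

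Finally I would estimate, for such large $n$,
$$
|A_n \cap B_n|
\leq
|B_\delta(O)|
\leq
|O| + 2m\delta
<
|A \cap B| + \varepsilon,
$$
where the middle inequality uses that enlarging each of the $m$ arcs by $\delta$ on each side adds at most $2m\delta$ to the total arc-length, and the last inequality uses the choice of $\delta$. Letting $\varepsilon \to 0$ yields the claimed semicontinuity. I do not anticipate any real obstacle: the sole place where the geometry of $\partial \D$ could in principle intervene is the neighborhood estimate $|B_\delta(O)| \leq |O| + 2m\delta$, and this holds because arc-length is additive over the finitely many arcs and the circle is one-dimensional, so that fattening an arc by $\delta$ on each end increases its length by exactly $2\delta$ (for $\delta$ smaller than, say, half the circumference, which we may freely assume since we will send $\delta \to 0$). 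Thus the proof transfers mechanically, which is precisely why the authors can relegate it to a single sentence of the form ``it is then trivial to modify Propositions~\ref{p:lebmsr:semicont} and \ref{p:specdist} to fit this setting.''
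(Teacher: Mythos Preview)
Your proposal is correct and matches the paper's approach exactly: the paper does not write out a separate proof for this proposition at all, merely asserting that ``it is then trivial to modify Propositions~\ref{p:lebmsr:semicont} and \ref{p:specdist} to fit this setting,'' and you have carried out precisely that modification. The covering argument, the choice of $\delta = \varepsilon/(4m)$, the compactness argument forcing $A_n \cap B_n$ into $B_\delta(A\cap B)$, and the neighborhood estimate $|B_\delta(O)| \leq |O| + 2m\delta$ all transfer verbatim, as you note.
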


\begin{prop} \label{p:unit:specdist}
If $U$ and $V$ are unitary operators, then
$$
d_{\Hd}(\sigma(U),\sigma(V))
\leq
\|U - V\|.
$$
\end{prop}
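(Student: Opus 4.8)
The plan is to mirror the proof of Proposition~\ref{p:specdist} almost verbatim, replacing the self-adjoint functional calculus with the one for unitary operators. The key point is that for a unitary operator $U$, the spectrum $\sigma(U)$ is a compact subset of $\partial \D$, and for any $z \in \C$ with $\dist(z, \sigma(U)) > 0$, the resolvent $(U - z)^{-1}$ exists and satisfies the bound $\norm{(U-z)^{-1}} = \dist(z, \sigma(U))^{-1}$. This last identity is precisely the normality-driven fact that underlies the self-adjoint case: for a normal operator, the norm of the resolvent equals the reciprocal of the distance from the point to the spectrum, by the spectral theorem.

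The key steps, in order, are as follows. First I would set $\delta = \norm{U - V}$ and take a point $z \in \partial \D$ (or, more safely, any $z \in \C$) with $\dist(z, \sigma(V)) > \delta$; the goal is to show $z \notin \sigma(U)$. Second, I invoke normality of $V$ together with the spectral theorem to write $\norm{(V-z)^{-1}}^{-1} = \dist(z, \sigma(V)) > \delta$. Third, I factor $U - z = (V - z)\bigl(I + (V-z)^{-1}(U - V)\bigr)$ and observe that $\norm{(V-z)^{-1}(U-V)} \leq \norm{(V-z)^{-1}} \cdot \delta < 1$, so the geometric series converges and the factor in parentheses is invertible; hence $U - z$ is invertible and $z \notin \sigma(U)$. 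This shows $\sigma(U) \subseteq B_\delta(\sigma(V))$. Finally, by the symmetry of the roles of $U$ and $V$, the same argument gives $\sigma(V) \subseteq B_\delta(\sigma(U))$, which together yield $d_{\Hd}(\sigma(U), \sigma(V)) \leq \delta = \norm{U - V}$ directly from the definition \eqref{eq:hdmetric:def}.

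I do not anticipate any genuine obstacle here, since the argument is structurally identical to that of Proposition~\ref{p:specdist}. The only conceptual input specific to the unitary setting is that unitary operators are normal, so the resolvent-norm identity $\norm{(V-z)^{-1}} = \dist(z,\sigma(V))^{-1}$ still holds; this is what makes the self-adjoint proof transfer without change. The mild subtlety worth a sentence is that the Hausdorff distance on the left is computed with respect to the metric on $\C$ (equivalently, the ambient Euclidean neighborhoods $B_\varepsilon(X)$), so the bound $d_{\Hd}(\sigma(U),\sigma(V)) \leq \norm{U-V}$ is a chordal statement about subsets of $\partial \D$ rather than an arc-length statement; this is consistent with how $d_{\Hd}$ is used in the CMV adaptation, where $\varepsilon$-neighborhoods are taken in $\C$. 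Given all this, the cleanest write-up is to state ``the proof is identical to that of Proposition~\ref{p:specdist}, using normality of unitary operators in place of self-adjointness,'' and then reproduce the two-line geometric-series factorization for completeness.
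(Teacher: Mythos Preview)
Your proposal is correct and is exactly what the paper intends: the paper does not give a separate proof of this proposition but simply remarks that it is a trivial modification of Proposition~\ref{p:specdist}, which is precisely the normality-based resolvent argument you outline. Your observation that the Hausdorff distance here is taken with respect to the ambient metric on $\C$ is also consistent with the paper's setup.
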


One also has a version of Floquet theory for periodic CMV matrices; that is, if $\alpha \in \D^{\Z}$ is $p$-periodic, then the spectrum of $\E = \E_\alpha$ consists of $p$ nonoverlapping closed subarcs of $\partial \D$, which can be found by examining a degree $p$-polynomial $D$, just as in the Jacobi case. As before, we say that $\E$ is $p$-generic if $\sigma(\E)$ consists of precisely $p$ connected components. In this setting, it is known that the $p$-generic CMV operators are dense in the space of all $p$-periodic CMV operators \cite[Theorem~11.13.1]{simopuc2}. There is a slight combinatorial difference here, namely, that $p$-generic CMV matrices have $p$ spectral gaps, not $p-1$.

The analog of the band-length estimate in Theorem~\ref{l.avila.lp1} is proved in \cite[Lemma~5]{ong12}. Specifically, if $\alpha$ is $p$-periodic, then
\begin{equation} \label{eq:cmv:bandmeas}
|A|
\leq
\frac{2\pi}{p}
\end{equation}
for each band $A \subseteq \sigma(\E_\alpha)$. Using Floquet theory for periodic CMV matrices, we can define $k$-break points of $\E$ in exactly the same way, namely, by partitioning each band of the spectrum into $k$ closed subarcs, each of which has harmonic measure $\frac{1}{kp}$. One can then prove a straightforward modification of Lemma~\ref{l:breakpts}.

\begin{lemma} \label{l:cmv:breakpts}
Suppose $\E = \E_\alpha$ is a $p$-generic CMV matrix and $k \geq 2$. For all $\varepsilon > 0$ sufficiently small, there exists $\delta > 0$ such that if $\alpha'$ is $kp$-generic and
$$
\|\alpha - \alpha'\| < \delta,
$$
then, for each proper $k$-break point $z$ of $\E$, there exists a gap of $\sigma(\E_{\alpha'})$ entirely contained within $B_\varepsilon(z)$. Each of the remaining $p$ gaps of $\sigma(\E)$ is contained in an $\varepsilon$-neighborhood of some gap of $\sigma(\E)$.
\end{lemma}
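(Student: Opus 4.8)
The plan is to transcribe the proof of Lemma~\ref{l:breakpts} into the unitary setting, replacing Proposition~\ref{p:specdist} by Proposition~\ref{p:unit:specdist} and Theorem~\ref{t:floq} by its CMV analog. The one genuinely new ingredient is the passage from a bound on $\|\alpha - \alpha'\|$ to a bound on $\|\E_\alpha - \E_{\alpha'}\|$: in the Jacobi case the difference of the operators equals $\|b - b'\|_\infty$ on the nose, whereas here the matrix entries $\rho, a, b, c, d$ depend nonlinearly on $\alpha$. This is precisely why the statement quantifies over an auxiliary $\delta$ rather than using $\varepsilon$ directly.

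First I would fix $\varepsilon > 0$ to be less than one-half the minimal distance between distinct $k$-break points of $\E$, so that their $\varepsilon$-neighborhoods (in the metric on $\C$) are pairwise disjoint. Second, I would record the continuity of the map $\alpha \mapsto \E_\alpha$. Since $\alpha$ is $p$-periodic it assumes only finitely many values, all in $\D$, so $\sup_n |\alpha(n)| \leq 1 - \eta$ for some $\eta > 0$; on the compact set $\{ |z| \leq 1 - \eta/2 \}$ each entry function is smooth, hence Lipschitz, in the $\alpha(n)$'s. Thus there is a constant $L = L(\eta)$ with $\|\E_\alpha - \E_{\alpha'}\| \leq L\|\alpha - \alpha'\|$ once $\|\alpha - \alpha'\|$ is small, and the same estimate holds for the finite unitary truncations with periodic and antiperiodic boundary conditions. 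Taking $\delta = \varepsilon / L$ then guarantees that $\|\alpha - \alpha'\| < \delta$ forces the relevant operator-norm distances to be below $\varepsilon$.

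With these in hand, the argument closes exactly as before. Using CMV Floquet theory I would identify the $k$-break points of $\E$ with the eigenvalues of the unitary truncations $\E_\alpha^{kp,\pm}$, just as Theorem~\ref{t:floq} does in the Jacobi case, and likewise identify the band edges of the $kp$-generic operator $\E_{\alpha'}$ with the eigenvalues of $\E_{\alpha'}^{kp,\pm}$. Proposition~\ref{p:unit:specdist} applied to the pairs $\E_\alpha^{kp,\pm}$ and $\E_{\alpha'}^{kp,\pm}$ then shows that every eigenvalue of $\E_{\alpha'}^{kp,\pm}$ lies within $\varepsilon$ of a $k$-break point of $\E$. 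Disjointness of the $\varepsilon$-neighborhoods, together with the genericity of $\alpha'$ and the combinatorial count (a $kp$-generic CMV matrix has $kp$ gaps, matching the $(k-1)p$ proper break points plus the $p$ gaps of $\E$), localizes one new gap in each $\varepsilon$-neighborhood of a proper break point and keeps each of the $p$ original gaps within $\varepsilon$ of a gap of $\sigma(\E)$.

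The main obstacle, such as it is, lies entirely in the second step: controlling the nonlinear dependence of the CMV entries on $\alpha$. Because $\alpha$ stays uniformly bounded away from $\partial \D$, this reduces to a routine Lipschitz estimate, after which the remainder of the proof is a direct transcription of the Jacobi argument.
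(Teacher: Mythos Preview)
Your proposal is correct and follows essentially the same approach as the paper: the paper simply remarks that the proof of Lemma~\ref{l:breakpts} carries over verbatim, with the single caveat that the identity $\|J_{a,b}-J_{a,b'}\| = \|b-b'\|_\infty$ must be replaced by a continuity estimate for $\alpha \mapsto \E_\alpha$. The only cosmetic difference is that the paper invokes the global H\"older bound $\|\E_\alpha - \E_{\alpha'}\|^2 \leq 72\,\|\alpha-\alpha'\|_\infty$ from \cite[(4.3.11)]{simopuc1}, whereas you derive a local Lipschitz bound by exploiting that the periodic coefficients stay uniformly inside $\D$; either route produces the required $\delta$, so the distinction is immaterial.
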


The proof is essentially the same as before, with  mostly cosmetic variations on the main theme. There is one minor annoyance in this case. Specifically, in the Jacobi case, we (implicitly) used the obvious identity
$$
\|J_{a,b} - J_{a,b'} \|
=
\| b - b' \|_\infty
$$
when we invoked Proposition~\ref{p:specdist}, and this does not translate directly to the CMV context. Instead, one has 
\begin{equation} \label{eq:cmv:holder}
\|\E_\alpha - \E_{\alpha'}\|^2
\leq
72 \|\alpha - \alpha'\|_\infty,
\end{equation}
by \cite[(4.3.11)]{simopuc1}. This simply introduces some constants which have no qualitative impact on the structure of the proof. With this variant of Lemma~\ref{l:breakpts} in hand, the proofs from Section~\ref{sec:proof} can be rerun with minor changes.

\begin{appendix}

\section{A Band Length Estimate for Periodic Jacobi Operators}

In this appendix, we provide a proof of a band length estimate for periodic Jacobi operators which is analogous to \eqref{eq:cmv:bandmeas} and \cite[Lemma~2.4(1)]{avila}. Specifically, we have the following upper bound.

\begin{theorem}\label{l.avila.lp1}
Suppose $J$ is a $p$-periodic Jacobi matrix, and let 
$$
A
=
\max(1,a_1,\ldots,a_p).
$$
The Lebesgue measure of any band of $\sigma(J)$ is bounded above by $\frac{2 \pi A^2}{p}$.
\end{theorem}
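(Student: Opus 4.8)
The plan is to deduce the band-length bound from a pointwise lower bound on the density of states. Recall from Floquet theory (Theorem~\ref{t:floq} and \cite[Chapter~5]{simszego}) that the integrated density of states $N$ of a $p$-periodic Jacobi matrix increases by exactly $1/p$ across each band, and that on the band interior its density can be written in terms of the discriminant as
$$
\frac{dN}{dE}
=
\frac{1}{p\pi}\,\frac{|D'(E)|}{\sqrt{4-D(E)^2}},
\qquad E \in \sigma_{\Int}(J).
$$
Writing $D(E)=2\cos k(E)$ for the quasimomentum $k\in(0,\pi)$ on a fixed band, one has $\sqrt{4-D^2}=2|\sin k|$, so $\tfrac{dN}{dE}=\tfrac{1}{p\pi}|k'(E)|$. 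Consequently, if I can establish the velocity bound $|dE/dk|\le 2A/p$, equivalently $|dk/dE|\ge p/(2A)$, on each band, then integrating over $[\alpha_j,\beta_j]$ and using $N(\beta_j)-N(\alpha_j)=1/p$ yields $\beta_j-\alpha_j\le 2\pi A/p$ at once. Since $A\ge 1$ this is stronger than the asserted bound $2\pi A^2/p$, so the entire problem reduces to bounding the group velocity $|dE/dk|$.

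To control this velocity I would work with the Bloch solution. For $E$ in the interior of a band, $|D(E)|<2$, so the monodromy matrix has simple eigenvalues $e^{\pm ik}$, and there is a solution $\psi$ of \eqref{eq:eveq} satisfying the Floquet condition $\psi(n+p)=e^{ik}\psi(n)$ which, together with $k(E)$, depends analytically on $E$ there. Differentiating \eqref{eq:eveq} in $E$ and pairing the result against $\psi$ via a discrete Green's (summation-by-parts) identity over one period, the interior contributions telescope and only boundary Wronskians survive; carrying out this computation yields
$$
\frac{dE}{dk}
=
-\frac{2 j}{S},
\qquad
S:=\sum_{n=1}^{p}|\psi(n)|^2,
$$
where $j:=a(m)\,\Im\bigl(\overline{\psi(m)}\,\psi(m+1)\bigr)$ is the Bloch current. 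The crucial feature is that, because $E$ is real, $j$ is \emph{independent of} $m$: it is the conserved Wronskian current of a single solution, a fact obtained by applying the same Green's identity to $\psi$ against itself.

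Conservation of $j$ is what delivers the sharp constant. For every bond $m$ one has the crude estimate $|j|=a(m)\,\bigl|\Im(\overline{\psi(m)}\psi(m+1))\bigr|\le A\,|\psi(m)|\,|\psi(m+1)|$, and since the left-hand side does not depend on $m$ I may multiply these $p$ inequalities together. In the resulting product each factor $|\psi(n)|$ occurs exactly twice, so $|j|^{p}\le A^{p}\prod_{n=1}^{p}|\psi(n)|^2$, and the arithmetic--geometric mean inequality bounds the product by $(S/p)^{p}$. Hence $|j|\le AS/p$, giving $|dE/dk|=2|j|/S\le 2A/p$ and therefore $|dk/dE|\ge p/(2A)$, which completes the argument as explained above.

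The main obstacle I anticipate is the careful bookkeeping in the Green's identity under Floquet boundary conditions: verifying the conservation of $j$ and isolating the boundary term cleanly, together with the analytic regularity of $\psi$ and $k$ needed to differentiate. These are routine on the open band but must be propagated to the band edges (where $\sin k$ vanishes and $\psi$ may degenerate) by continuity, since the length is determined by the closure. The density-of-states formula and the per-band mass $1/p$ are standard consequences of Floquet theory and require no new work.
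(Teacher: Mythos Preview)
Your argument is correct and in fact yields the sharper bound $|B|\le 2\pi A/p$ (indeed $2\pi(\prod_j a_j)^{1/p}/p$ if one keeps track of the individual $a_j$'s), stronger than the paper's $2\pi A^2/p$. The overall architecture is the same as the paper's---both bound $|B|$ by integrating a pointwise lower bound on $dN/dE=\tfrac{1}{p\pi}|d\theta/dE|$ against the per-band mass $1/p$---but the mechanisms for bounding $|d\theta/dE|$ from below are genuinely different. The paper conjugates each shifted monodromy matrix $\Phi_j$ to a rotation by some $M_j\in\SL(2,\R)$, introduces an ``anti-trace'' functional, and proves the identity $|d\theta/dE|=\tfrac12\sum_j |z_j|^2/\Im z_j$ where $z_j$ is the fixed point of $\Phi_j$ on $\C_+$; the $A^2$ arises from the transition rule $\Im z_{j+1}=\Im z_j/(a_j^2|z_j|^2)$ together with the crude estimate $\|M_j\|_2^2\ge 2$. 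Your route via the Bloch current and Hellmann--Feynman is more elementary and transparent: the conserved current supplies $p$ identical inequalities that can be multiplied, and AM--GM converts the resulting geometric mean into $S/p$ with no loss, which is exactly where you save the extra factor of $A$. (Incidentally, the paper's framework recovers your sharper constant too if one applies AM--GM to $\sum_j |z_j|^2/\Im z_j$ and uses $\prod_j|z_j|^2=1/\prod_j a_j^2$.) Your worry about propagating to band edges is unnecessary: the lower bound on $dN/dE$ on the open band together with continuity of $N$ already gives $\tfrac1p\ge |B|/(2\pi A)$ by integrating over the open interval.
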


In order to prove the desired band length estimate, we need to discuss the integrated density of states for periodic Jacobi operators. In particular, we will elucidate a point of view on the IDS of periodic operators discussed in \cite{AD08}. This is a special case of general, powerful formulas for absolutely continuous spectrum; see \cite{deiftsimon83}. The material in this appendix is standard and well-known within the community, but we opted to present it here, since \cite{avila} and \cite{AD08} do not work out the details explicitly, and these references work exclusively in the discrete Schr\"odinger setting, where $a \equiv 1$ (except \cite{deiftsimon83}, which also works out a similar framework for continuum Schr\"odinger operators).

In general, for a Jacobi matrix $J$, the corresponding integrated density of states, $k$, is defined by the limit
\begin{equation}\label{eq:ids:def}
k(E)
=
\lim_{N \to \infty} \frac{1}{2N+1} \#\{ \lambda \in \sigma(J_N) : \lambda \leq E \},
\end{equation}
whenever the limit exists, where $J_N$ denotes the restriction of $J$ to the interval $[-N,N]$ with Dirichlet boundary conditions, i.e.
$$
J_N =
\begin{pmatrix}
b(-N) & a(-N)    &        &        &  \\
a(-N)    & b(-N+1) & a(-N+1)      &        &   & \\
     & \ddots    & \ddots & \ddots &   & \\
          &     & a(N-2) & b(N-1) & a(N-1)  & \\
   &      &        & a(N-1)      & b(N)
\end{pmatrix}.
$$
It is a well-known fact that the the limit on the right-hand side of \eqref{eq:ids:def} exists whenever $J$ is a $p$-periodic Jacobi matrix. Using  Floquet theory, one can explicitly describe $k$ in terms of the discriminant, $D$. The following theorem is standard; see \cite[Theorem~5.4.5]{simszego}.

\begin{theorem} \label{t.periodic.ids}
Suppose $J$ is a $p$-periodic Jacobi matrix, with corresponding discriminant $D$ and integrated density of states $k$. Then $k$ is differentiable on on the interior of the spectrum. We have
\begin{equation}  \label{e.periodic.ids}
\frac{dk}{dE} 
= 
\frac{1}{\pi p} \left| \frac{d\theta}{dE} \right|,
\end{equation}
where $\theta = \theta(E)$ is chosen continuously so that
\begin{equation}\label{eq:pertheta:def}
2 \cos(\theta) = D(E)
\end{equation}
for $E \in \sigma_{\Int}(H)$. In particular, if $B$ is any band of the spectrum,
\begin{equation} \label{eq:per:bandmeas}
\int_B \! dk(E) = \frac{1}{p}.
\end{equation}
\end{theorem}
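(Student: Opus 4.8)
The plan is to prove the formula by directly computing the eigenvalue-counting measure of large periodic truncations of $J$ via Floquet theory, exploiting the fact that the choice of boundary condition alters the count by only $O(1)$ and so is invisible in the limit \eqref{eq:ids:def}. First I would record the elementary but crucial observation that each transfer matrix $T_E(n)$ has determinant $1$ (the scalar $a(n)^{-1}$ squared cancels the $a(n)^2$ entry), so that the monodromy matrix $\Phi_E$ lies in $\SL(2,\R)$. Consequently, for $E \in \sigma_{\Int}(J)$, where $|D(E)| < 2$, the eigenvalues of $\Phi_E$ are $e^{\pm i\theta(E)}$ with $2\cos\theta(E) = \tr(\Phi_E) = D(E)$, and $\theta(E) \in (0,\pi)$ can be chosen to vary continuously and strictly monotonically across each band.

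Next I would count the eigenvalues of the restriction of $J$ to $[1,Np]$ with \emph{periodic} boundary conditions. A Bloch solution $u$ with $u(\cdot + p) = e^{i\phi} u(\cdot)$ satisfies the $Np$-periodic condition precisely when $e^{iN\phi} = 1$, i.e.\ when $\phi = \phi_m := 2\pi m/N$ for $m = 0, 1, \ldots, N-1$. Since $e^{i\phi_m}$ is an eigenvalue of $\Phi_E$ exactly when $\phi_m \equiv \pm\theta(E) \pmod{2\pi}$, and $\theta$ sweeps $(0,\pi)$ monotonically across each band, each phase $\phi_m$ contributes exactly one truncated eigenvalue per band (up to band-edge degeneracies). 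Thus every band carries $N + O(1)$ of the $Np$ eigenvalues, which already exhibits the symmetry underlying \eqref{eq:per:bandmeas}.

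The heart of the argument is then an equidistribution statement: the phases $\phi_m$ are uniformly spaced on $[0,2\pi)$, and the two-to-one folding $\phi \mapsto \theta$ determined by $\cos\theta = \cos\phi$ pushes the uniform probability measure $\frac{d\phi}{2\pi}$ forward to $\frac{d\theta}{\pi}$ on $[0,\pi]$. Hence, on a fixed band $B$ where $\theta$ increases with $E$, the number of truncated eigenvalues with $\theta < \theta^*$ is asymptotic to $N\theta^*/\pi$; dividing by the normalization (comparable to $Np$) and adding the full contribution $\frac{1}{p}$ from each band lying to the left of $E$ gives $k(E) = \frac{1}{p}\,\#\{\text{bands left of }E\} + \frac{\theta(E)}{\pi p}$, up to orientation on the ambient band. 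Differentiating yields \eqref{e.periodic.ids}, the absolute value accounting for bands on which $\theta$ decreases with $E$, and integrating over a full band, where $|\theta|$ changes by exactly $\pi$, yields \eqref{eq:per:bandmeas}.

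The hard part will be making the counting rigorous at two delicate points. First, I must justify that replacing Dirichlet by periodic boundary conditions and restricting to an integer number of full periods perturbs the counting function by a bounded amount, so that the limit \eqref{eq:ids:def} is unaffected; this is a standard rank/interlacing estimate but should be stated with care. Second, the equidistribution must be controlled uniformly near the band edges $\theta \in \{0,\pi\}$, where the folded phases accumulate and where solutions of $D(E) = \pm 2$ may be degenerate. Once the equidistribution of $\{\theta(E_m)\}$ against $\frac{d\theta}{\pi}$ is in hand, differentiability of $k$ on $\sigma_{\Int}(J)$ follows from the smoothness and strict monotonicity of $\theta(E)$ on each band, and the remaining identities are immediate.
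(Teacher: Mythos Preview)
The paper does not actually prove this theorem: it is stated as ``standard'' and the reader is referred to \cite[Theorem~5.4.5]{simszego} for the argument. Your proposal, which counts eigenvalues of large periodic truncations via the Floquet phases $e^{\pm i\theta(E)}$ of the monodromy matrix and passes to the limit, is precisely the classical route taken in that reference, and your outline correctly identifies both the main mechanism (equidistribution of $\phi_m = 2\pi m/N$ pushed forward to $d\theta/\pi$) and the two points requiring care (boundary-condition independence and behavior at band edges). In short, there is nothing to compare against in the paper itself; your plan is the standard one and is correct.
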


Recall that any $A \in \SL(2,\R)$ induces a linear fractional transformation on the upper half-plane $\C_+ = \{ z \in \C : \mathrm{Im}(z) > 0\}$ via
$$
\begin{pmatrix}
a & b \\
c & d
\end{pmatrix} \cdot z
= \frac{az + b}{cz+d}.
$$
For $\theta \in \R$, define
$$
R_\theta
=
\begin{pmatrix} \cos(\theta) & - \sin(\theta) \\ \sin(\theta) & \cos(\theta) \end{pmatrix}
$$
Obviously,
 \begin{equation}\label{eq:char:so2}
\SO(2)
=
\{ R_\theta : 0 \leq \theta < 2 \pi \}
=
\{ R \in \SL(2,\R) : R \cdot i = i \}.
\end{equation}

\begin{lemma} \label{l:elliptic:sl2}
A matrix $A \in \SL(2,\R)$ satisfies $|\tr(A)| < 2$ if and only if its action on $\C_+$ has a unique fixed point. Whenever $|\tr(A)| < 2$, there exists $M \in \SL(2,\R)$ such that
$$
MAM^{-1} = R_\theta \in \SL(2,\R),
$$ 
where $2 \cos(\theta) = \tr(A)$.  Moreover, such a conjugacy is unique modulo left-multiplication by an element of $\SO(2)$.
\end{lemma}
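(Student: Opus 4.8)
The plan is to establish the three assertions in order, relying throughout on the standard dictionary between $\SL(2,\R)$ and the orientation-preserving isometries of $\C_+$, together with the characterization \eqref{eq:char:so2} of $\SO(2)$ as the stabilizer of $i$.

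First, for the trace criterion, I would solve the fixed-point equation $\frac{az+b}{cz+d}=z$ directly, writing $A=\begin{pmatrix} a & b \\ c & d\end{pmatrix}$. When $c\neq 0$ this rearranges to the quadratic $cz^2+(d-a)z-b=0$, whose discriminant, upon substituting $ad-bc=1$, simplifies to $(d-a)^2+4bc=(a+d)^2-4=\tr(A)^2-4$. Hence the two roots form a complex-conjugate pair (exactly one lying in $\C_+$) precisely when $|\tr(A)|<2$, and are real when $|\tr(A)|\geq 2$. When $c=0$ the map is affine and one checks that $|\tr(A)|=|a+a^{-1}|\geq 2$ automatically, so no nonreal fixed point can occur. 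Together these cases show that $A$ has a (necessarily unique) fixed point in $\C_+$ if and only if $|\tr(A)|<2$.

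Next, assuming $|\tr(A)|<2$, let $\tau\in\C_+$ denote the fixed point. Since $\SL(2,\R)$ acts transitively on $\C_+$, I can choose $M\in\SL(2,\R)$ with $M\cdot\tau=i$. Because fixed points are transported by conjugation, the relation $A\cdot\tau=\tau$ yields $(MAM^{-1})\cdot i=(MAM^{-1})\cdot(M\tau)=M\cdot(A\tau)=M\cdot\tau=i$, so $MAM^{-1}$ fixes $i$ and therefore lies in $\SO(2)$ by \eqref{eq:char:so2}. Writing $MAM^{-1}=R_\theta$ and using invariance of the trace under conjugation gives $2\cos\theta=\tr(R_\theta)=\tr(A)$, as required.

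Finally, for uniqueness, suppose $MAM^{-1}=R_\theta$ and $M'A(M')^{-1}=R_{\theta'}$ both lie in $\SO(2)$, and set $N=M'M^{-1}$, so that $N R_\theta N^{-1}=R_{\theta'}$. Transporting the fixed point $i$ of $R_\theta$ through this conjugation shows that $N\cdot i$ is fixed by $R_{\theta'}$; since $|\tr(A)|<2$ forces $\theta'\notin\{0,\pi\}$, the rotation $R_{\theta'}$ is not $\pm I$ and so has $i$ as its only fixed point in $\C_+$, giving $N\cdot i=i$ and hence $N\in\SO(2)$ by \eqref{eq:char:so2}. Thus $M'=NM$ with $N\in\SO(2)$, which is exactly the claimed uniqueness modulo left multiplication by $\SO(2)$. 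The calculations are elementary and I anticipate no serious obstacle; the one point demanding care is this last step, where one must explicitly rule out the degenerate rotations $R_0,R_\pi=\pm I$ (automatic here, as $|\tr(A)|<2$) before invoking the fact that $\SO(2)$ is precisely the stabilizer of $i$.
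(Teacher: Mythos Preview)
Your proof is correct and supplies precisely the ``straightforward calculations'' that the paper's own proof omits; the paper gives no details beyond that phrase, so there is nothing substantive to compare. The one subtlety you flag---ruling out $R_{\theta'}=\pm I$ before invoking uniqueness of the fixed point---is handled correctly via the trace condition.
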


\begin{proof}
This is a consequence of straightforward calculations. 
\end{proof}

Now, for $E$ in the interior of a band, $ |D(E)| < 2$, so the monodromy matrix is conjugate to $R_\theta$, where $\theta$ satisfies $2 \cos(\theta) = D(E)$. From Theorem~\ref{t.periodic.ids}, we know that the derivative of the integrated density of states can be related to $ |d\theta / dE| $, so we would like to find some other way to recover this derivative.  By way of motivation, suppose $\theta$ is a smooth function of $t$.  It is then easy to check that
$$
R_\theta^{-1} \frac{dR_\theta}{dt} 
= 
\begin{pmatrix} 0 & - d\theta/dt \\ d\theta/dt & 0 \end{pmatrix}.
$$
This motivates us to define the \emph{anti-trace} of a $2 \times 2$ matrix by
$$
\atr \begin{pmatrix} a & b \\ c & d \end{pmatrix} = c - b.
$$
Like the usual trace, the anti-trace is a linear functional in the sense that 
$$
\atr(A + \lambda B) = \atr(A) + \lambda \, \atr(B)
$$
for all $\lambda \in \R$ and all  $A, B \in \R^{2\times 2}$.  However, unlike the trace, the anti-trace is not cyclic, i.e., one can have $\atr(AB) \neq \atr(BA)$.  Despite this, we still have the following weakened variant of cyclicity.

\begin{lemma} \label{l:atr:conj:inv}
If $R \in \SO(2)$ and $A \in \R^{2 \times 2}$,
\begin{equation} \label{eq:atr:conj:inv}
\atr\left(R^{-1} A R \right)
=
\atr(A).
\end{equation}
\end{lemma}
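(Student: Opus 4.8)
The plan is to reduce the non-cyclic anti-trace to the ordinary (cyclic) trace by pairing it with a fixed auxiliary matrix. Setting $W = \begin{pmatrix} 0 & 1 \\ -1 & 0 \end{pmatrix}$, a one-line computation shows that $\atr(A) = \tr(WA)$ for every $A \in \R^{2\times 2}$: indeed, for $A = \begin{pmatrix} a & b \\ c & d \end{pmatrix}$ the $(1,1)$ and $(2,2)$ entries of $WA$ are precisely $c$ and $-b$, so their sum is $c - b = \atr(A)$. This identity is the crux of the argument, since it converts the obstruction (the anti-trace is not cyclic) into a statement about the genuinely cyclic functional $\tr$.

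The second ingredient is the observation that $W = R_{-\pi/2}$ belongs to $\SO(2)$, cf.\ \eqref{eq:char:so2}. Since $\SO(2)$ is abelian, $W$ commutes with every $R \in \SO(2)$, and hence $R W R^{-1} = W$ for all $R \in \SO(2)$.

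With these two facts in hand, the conclusion follows immediately from invariance of the trace under conjugation:
$$
\atr(R^{-1} A R)
=
\tr\left(W R^{-1} A R\right)
=
\tr\left(R W R^{-1} A\right)
=
\tr(W A)
=
\atr(A),
$$
where the second equality uses cyclicity of $\tr$ and the third uses $R W R^{-1} = W$.

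I do not expect any genuine obstacle here; the single point requiring a moment's thought is the representation $\atr = \tr(W\,\cdot\,)$, after which the desired invariance \eqref{eq:atr:conj:inv} is inherited from the familiar conjugation-invariance of the trace. One could alternatively verify \eqref{eq:atr:conj:inv} by directly multiplying out $R_\theta^{-1} A R_\theta$ and reading off the difference of its off-diagonal entries, but the approach above avoids that bookkeeping and makes transparent why the hypothesis $R \in \SO(2)$ (rather than a general element of $\SL(2,\R)$) is exactly what is needed.
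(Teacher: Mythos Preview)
Your proof is correct. The paper's own proof consists of the single sentence ``This is an easy calculation,'' i.e.\ the direct expansion of $R_\theta^{-1} A R_\theta$ that you mention (and explicitly avoid) in your last paragraph. Your route via the identity $\atr(A) = \tr(WA)$ with $W = R_{-\pi/2}$ is a genuinely different and more conceptual argument: it trades the explicit $2\times 2$ bookkeeping for cyclicity of the trace together with commutativity of $\SO(2)$, and in doing so it makes transparent exactly why the hypothesis $R \in \SO(2)$ is the right one (namely, that $W$ itself lies in $\SO(2)$ and hence commutes with $R$). Both approaches are entirely elementary; yours has the advantage of explaining the phenomenon rather than merely verifying it.
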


\begin{proof}
This is an easy calculation.
\end{proof}

\begin{lemma} \label{l:smoothconj:to:rot}
Suppose $I$ is an open interval and $\Phi: I \to \SL(2,\R)$ is a smooth map such that $\big| \tr(\Phi(t)) \big| < 2$ for all $t \in I$. Under these conditions, there exists a smooth choice of $M \in \SL(2,\R)$ such that
\begin{equation} \label{eq:smoothconj:to:rot}
M \Phi M^{-1} = R_\theta,
\end{equation}
where $2 \cos(\theta) = \tr(\Phi)$. Moreover, the angle $\theta$ can be chosen to be a smooth function of $t$; in this case, it satisfies
$$
\frac{d\theta}{dt} = \frac{1}{2} \atr \left( M \Phi^{-1} \frac{d\Phi}{dt} M^{-1} \right), 
$$
\end{lemma}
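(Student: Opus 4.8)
The plan is to exploit the dynamical description of elliptic elements furnished by Lemma~\ref{l:elliptic:sl2}. Since $|\tr(\Phi(t))| < 2$ for every $t \in I$, each $\Phi(t)$ has a unique fixed point $z(t) \in \C_+$ for its action on the upper half-plane. First I would make this fixed point explicit: writing $\Phi(t) = \begin{pmatrix} a & b \\ c & d \end{pmatrix}$ with $t$-dependent entries, the fixed points solve $cz^2 + (d-a)z - b = 0$, whose discriminant equals $\tr(\Phi)^2 - 4 < 0$. In particular $c(t) \neq 0$ throughout, since $c = 0$ would force $ad = 1$ with $a,d \in \R$, hence $|\tr(\Phi)| = |a+d| \geq 2$, a contradiction. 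Thus the quadratic has a conjugate pair of roots off the real axis that never collide, and the one lying in $\C_+$ defines a smooth map $t \mapsto z(t)$.

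Next I would build the conjugating matrix by hand. Writing $z = x + iy$ with $y > 0$, the matrix
$$
M = \begin{pmatrix} y^{-1/2} & -x\,y^{-1/2} \\ 0 & y^{1/2} \end{pmatrix}
$$
lies in $\SL(2,\R)$, depends smoothly on $t$, and sends $z(t) \mapsto i$. Hence $M\Phi M^{-1}$ fixes $i$, so by \eqref{eq:char:so2} it lies in $\SO(2)$ and equals some $R_\theta$; cyclicity of the trace gives $2\cos\theta = \tr(R_\theta) = \tr(\Phi)$, as required. Since $t \mapsto M\Phi M^{-1}$ is a smooth $\SO(2)$-valued map and $\R \to \SO(2) \cong S^1$ is a covering, it lifts to a smooth $\theta = \theta(t)$ on the interval $I$, establishing \eqref{eq:smoothconj:to:rot} with the desired smoothness.

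It remains to derive the formula for $d\theta/dt$. Differentiating $\Phi = M^{-1} R_\theta M$ yields $\Phi^{-1}\dot\Phi = \mathrm{A} + \mathrm{B} + \mathrm{C}$, where $\mathrm{B} = M^{-1} R_\theta^{-1} \dot R_\theta M$ and $\mathrm{A}, \mathrm{C}$ collect the contributions of $\dot M$. Conjugating by $M$, the middle term gives $M\,\mathrm{B}\,M^{-1} = R_\theta^{-1}\dot R_\theta = \begin{pmatrix} 0 & -\dot\theta \\ \dot\theta & 0 \end{pmatrix}$, whose anti-trace is exactly $2\dot\theta$. Using $M\dot{M^{-1}} = -\dot M M^{-1}$, the two leftover terms assemble into $X - R_\theta^{-1} X R_\theta$ with $X = \dot M M^{-1}$, and by Lemma~\ref{l:atr:conj:inv} the anti-trace is invariant under $\SO(2)$-conjugation, so this difference is annihilated by $\atr$. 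By linearity of the anti-trace we conclude $\atr\!\left(M\Phi^{-1}\dot\Phi M^{-1}\right) = 2\dot\theta$, which is precisely the claimed identity.

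I expect the main obstacle to be the smoothness bookkeeping rather than the algebra: one must verify that $z(t)$, and therefore $M(t)$, can be chosen smoothly and without degeneration (the nonvanishing of $c(t)$ and the positivity of $y(t)$), and that the $\SO(2)$-valued curve admits a genuinely smooth lift $\theta(t)$ on $I$. Once Lemma~\ref{l:atr:conj:inv} is invoked to cancel the $\dot M$ contributions, the derivative identity itself is a short and routine computation.
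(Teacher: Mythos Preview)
Your proof is correct and follows essentially the same approach as the paper's: construct $M$ explicitly from the smoothly varying fixed point $z(t)\in\C_+$, then differentiate the conjugacy relation and use Lemma~\ref{l:atr:conj:inv} to show that the $\dot M$ contributions have vanishing anti-trace, leaving only the $R_\theta^{-1}\dot R_\theta$ term. The only differences are cosmetic---you differentiate $\Phi = M^{-1}R_\theta M$ rather than $M\Phi M^{-1}=R_\theta$, and you are more explicit about the smoothness of $z(t)$ (via $c(t)\neq 0$) and the lift of $\theta$---but the substance is identical.
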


\begin{proof}
To construct the conjugacy $M$, first notice that the unique fixed point $z=z(t) \in \C_+$ of $\Phi$ varies smoothly with $t$.  We then define
$$
M(t)
=
\big( \mathrm{Im}(z(t)) \big)^{-1/2} \begin{pmatrix} 1 & - \mathrm{Re}(z(t)) \\ 0 & \mathrm{Im}(z(t)) \end{pmatrix},
$$
Evidently, the linear fractional transformation corresponding to $M \Phi M^{-1}$ fixes $i$, which implies $M \Phi M^{-1} \in \SO(2)$. By cyclicity of the trace, $M \Phi M^{-1}$ must be of the claimed form. Differentiating the relation \eqref{eq:smoothconj:to:rot} using the product rule, one obtains
$$
\frac{dM}{dt} \Phi M^{-1} + M \frac{d\Phi}{dt} M^{-1} + M \Phi \frac{dM^{-1}}{dt}   = \frac{dR}{dt}
=
R \begin{pmatrix} 0 & - d\theta/dt \\ d\theta/dt & 0 \end{pmatrix}.
$$
Multiply on the left by $R^{-1}$ and simplify using \eqref{eq:smoothconj:to:rot} to obtain
\begin{equation} \label{eq:smoothconj:to:rot:step}
R^{-1} \frac{dM}{dt} M^{-1} R + M \Phi^{-1} \frac{d\Phi}{dt} M^{-1} + M\frac{d M^{-1}}{dt}
=
\begin{pmatrix} 0 & - d\theta/dt \\ d\theta/dt & 0 \end{pmatrix}.
\end{equation}
By \eqref{eq:atr:conj:inv}, linearity of the anti-trace, and the product rule,
\begin{align*}
\atr \left(R^{-1} \frac{dM}{dt} M^{-1} R + M \frac{dM^{-1}}{dt}\right)
& =
\atr\left(  \frac{dM}{dt} M^{-1} + M \frac{d M^{-1}}{dt} \right) \\
& =
\atr\left( \frac{d}{dt}(MM^{-1}) \right) \\
& =
0.
\end{align*}
Thus, \eqref{eq:smoothconj:to:rot} follows by taking the anti-trace of \eqref{eq:smoothconj:to:rot:step}.
\end{proof}

We can use the preceding lemma to find another way to view the integrated density of states of a periodic Jacobi operator via Hilbert-Schmidt norms of conjugacies between monodromy matrices and rotations. Specifically, suppose $J$ is $p$-periodic and denote 
$$
T_j = \frac{1}{a(j)} \begin{pmatrix} E - b(j) & - 1 \\ a(j)^2 & 0 \end{pmatrix}, 
\quad
A_j = T_{j} \cdots T_1,
\quad
\Phi_j = A_{j-1} A_p A_{j-1}^{-1},
\quad
\text{for } j \geq 1,
$$
where we adopt the convention $A_0 = I$ in the $j = 1$ case of the final definition and suppress the dependence of all quantities on $E$ for notational simplicity. For $E \in \sigma_{\Int}(H)$, choose $M_j \in \SL(2,\R)$ such that $M_j \Phi_j M_j^{-1} \in \SO(2)$.

\begin{theorem} \label{t:per:ids:hilbschmidt}
Let $J$ be a $p$-periodic Jacobi operator with corresponding integrated density of states $k$, and put 
$$
A = \max(a_1,\ldots,a_p,1).
$$
We have
$$
\frac{dk}{dE}
\geq
\frac{1}{4 \pi A^2 p} \sum_{j=1}^p \| M_j \|_2^2
$$
on $\sigma_{\Int}(H)$, where $ \| M \|_2 = \sqrt{\tr(M^* M)} $ denotes the Hilbert-Schmidt norm of $M$.
\end{theorem}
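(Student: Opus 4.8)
The plan is to combine the Floquet description of the density of states in Theorem~\ref{t.periodic.ids} with the smooth diagonalization of Lemma~\ref{l:smoothconj:to:rot}, and then reduce everything to an explicit anti-trace computation. By \eqref{e.periodic.ids} it suffices to bound $|d\theta/dE|$ from below, where $2\cos\theta = D(E)$. Since $\tr(\Phi_j) = \tr(\Phi_1) = D(E)$ for every $j$ (the $\Phi_j$ are mutually conjugate), the \emph{same} angle $\theta$ works simultaneously for all $j$, and so applying Lemma~\ref{l:smoothconj:to:rot} with $t = E$, $\Phi = \Phi_1 = A_p$, and $M = M_1$ gives
\[
\frac{d\theta}{dE} = \frac12 \atr\!\left( M_1 \Phi_1^{-1} \frac{d\Phi_1}{dE} M_1^{-1}\right).
\]

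First I would compute $\Phi_1^{-1}\, d\Phi_1/dE$ by hand. Writing $\Phi_1 = T_p\cdots T_1$ and using the product rule together with $dT_m/dE = a(m)^{-1}e_{11}$ (where the $e_{ij}$ are the matrix units), the telescoping identity $A_p^{-1}T_p\cdots T_{m+1} = A_m^{-1}$ collapses the sum to $\sum_{m=1}^p A_m^{-1}(dT_m/dE)A_{m-1}$, and the one-line computation $T_m^{-1}(dT_m/dE) = -e_{21}$ (immediate from the explicit form of $T_m^{-1}$) yields
\[
\Phi_1^{-1}\frac{d\Phi_1}{dE} = -\sum_{m=1}^p A_{m-1}^{-1} e_{21} A_{m-1}.
\]
Next I would exploit the $\SO(2)$-ambiguity from Lemma~\ref{l:elliptic:sl2}: since $\Phi_m = A_{m-1}\Phi_1 A_{m-1}^{-1}$, both $M_1$ and $M_m A_{m-1}$ conjugate $\Phi_1$ to the \emph{same} $R_\theta$, so $M_m A_{m-1} = Q_m M_1$ with $Q_m\in\SO(2)$. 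Hence $M_1 A_{m-1}^{-1} e_{21} A_{m-1} M_1^{-1} = Q_m^{-1}(M_m e_{21}M_m^{-1})Q_m$, and the conjugation-invariance of the anti-trace under $\SO(2)$ (Lemma~\ref{l:atr:conj:inv}) reduces each summand to $\atr(M_m e_{21}M_m^{-1})$. A direct $2\times2$ calculation gives $\atr(M_m e_{21}M_m^{-1}) = \|M_m e_2\|^2$, the squared norm of the second column of $M_m$ (here $e_1,e_2$ denote the standard basis vectors). Combining, $d\theta/dE = -\tfrac12\sum_{m=1}^p \|M_m e_2\|^2 \le 0$, so $dk/dE = \tfrac{1}{2\pi p}\sum_{m=1}^p\|M_m e_2\|^2$.

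The remaining step — and the only genuinely delicate one — is to compare $\sum_m\|M_m e_2\|^2$ with $\sum_m\|M_m\|_2^2 = \sum_m(\|M_m e_1\|^2 + \|M_m e_2\|^2)$; a termwise comparison is simply false, so the estimate must be made in aggregate using periodicity. From $\Phi_{m+1} = T_m\Phi_m T_m^{-1}$ and the same uniqueness argument, $M_{m+1} = \tilde Q_m M_m T_m^{-1}$ with $\tilde Q_m\in\SO(2)$; since $T_m^{-1}e_1 = -a(m)e_2$, the first column of $M_{m+1}$ equals $-a(m)\tilde Q_m M_m e_2$, whence $\|M_{m+1}e_1\|^2 = a(m)^2\|M_m e_2\|^2 \le A^2\|M_m e_2\|^2$. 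Because $\Phi_{p+1} = \Phi_1$ closes the cycle, reindexing gives $\sum_{m=1}^p\|M_m e_1\|^2 \le A^2\sum_{m=1}^p\|M_m e_2\|^2$, and therefore $\sum_m\|M_m\|_2^2 \le (A^2+1)\sum_m\|M_m e_2\|^2 \le 2A^2\sum_m\|M_m e_2\|^2$, using $A\ge 1$. Substituting into the formula for $dk/dE$ produces exactly $dk/dE \ge (4\pi A^2 p)^{-1}\sum_{m=1}^p\|M_m\|_2^2$. I expect the main obstacle to be precisely this column-norm comparison: the key realization is that the recursion $M_{m+1}=\tilde Q_m M_m T_m^{-1}$ carries second columns to first columns with a gain of $a(m)$, after which cyclicity lets one trade all first-column contributions for second-column ones.
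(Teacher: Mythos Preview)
Your proof is correct and follows the same strategy as the paper's: both compute $\Phi_1^{-1}\,d\Phi_1/dE$ via the product rule, use the $\SO(2)$-invariance of the anti-trace to reduce to a sum over the $M_m$, and then invoke a periodicity/index-shift identity to pass from the ``second-column'' contributions to the full Hilbert--Schmidt norms. The only difference is presentational: the paper specializes to the explicit fixed-point conjugacy $M_j$ and carries out the computation in terms of $z_j$ (obtaining $|z_j|^2/\mathrm{Im}(z_j)$ and the relation $\mathrm{Im}(z_{j+1}) = \mathrm{Im}(z_j)/(a_j^2|z_j|^2)$), whereas you work coordinate-free with column norms and the recursion $M_{m+1} = \tilde Q_m M_m T_m^{-1}$ --- these are the same identities in different clothing.
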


\begin{proof}
First, notice that $\|M_j\|_2$ does not depend on the choice of conjugacy, for any other conjugacy from $\Phi_j$ to a rotation must take the form $O M_j$ for some $O \in \SO(2)$ by Lemma~\ref{l:elliptic:sl2}.  Since we may take $M_j$ to be given by the explicit formula
$$
M_j
=
\big( \mathrm{Im}(z_j) \big)^{-1/2}
\begin{pmatrix}
1 & - \mathrm{Re}(z_j) \\
0 & \mathrm{Im}(z_j)
\end{pmatrix}
$$
we see that
$$
\| M_j \|_2^2 
=
\frac{1 + |z_j|^2}{\mathrm{Im}(z_j)},
$$
where $z_j$ is the unique fixed point of the action of $\Phi_j$ on $\C_+$. Notice that $T_j z_j = z_{j+1}$ and hence $M_{j+1} T_j M_j^{-1} $ fixes $i$, so $M_{j+1} T_j M_j^{-1} =: Q_j \in \SO(2)$. One can easily compute
$$ 
T_j^{-1} \frac{dT_j}{dE} = \begin{pmatrix} 0 & 0 \\ -1 & 0 \end{pmatrix}, 
$$ 
Thus, by the product rule, we have
$$
\Phi_1^{-1} \frac{d\Phi_1}{dE}
=
\sum_{j=1}^{p} A_{j-1}^{-1} \begin{pmatrix} 0 & 0 \\ -1 & 0 \end{pmatrix} A_{j-1}
$$
With $R_j = Q_{j} \cdots Q_1$ and $R_0 = I$, we have
\begin{equation} \label{eq:per:ids:hilbschmidt:deriv}
\Phi_1^{-1} \frac{d\Phi_1}{dE}
=
\sum_{j=1}^{p} M_1^{-1} R_{j-1}^{-1} M_{j} \begin{pmatrix} 0 & 0 \\ -1 & 0 \end{pmatrix} M_{j}^{-1} R_{j-1} M_1
\end{equation}
To find the rate of change of $\theta$ with respect to $E$, we apply Lemma~\ref{l:smoothconj:to:rot} and compute
\begin{align*}
 \frac{d\theta}{dE}
& = 
\frac{1}{2} \atr \left( M_1 \Phi_1^{-1} \frac{d\Phi_1}{dE} M_1^{-1} \right)  \\
& =
\frac{1}{2} \atr\left( \sum_{j=1}^{p} M_{j} \begin{pmatrix} 0 & 0 \\ -1 & 0 \end{pmatrix} M_{j}^{-1} \right)  \\
& =
\frac{1}{2} \sum_{j=1}^{p} \frac{|z_j|^2}{\mathrm{Im}(z_j)}.
\end{align*}
The second line follows from \eqref{eq:per:ids:hilbschmidt:deriv} and Lemma~\ref{l:atr:conj:inv}, and the final line is a straightforward computation from the explicit form of $M_j$. An easy calculation using $ T_j z_j = z_{j+1} $ reveals
$$
\mathrm{Im}(z_{j+1})
=
\mathrm{Im}\left( \frac{E - b_{j}}{a_j^2} - \frac{1}{a_j^2 z_j} \right)
=
\frac{\mathrm{Im}(z_j)}{a_j^2|z_j|^2},
$$
which implies
\begin{align*}
\left| \frac{d\theta}{dE} \right|
& =
\frac{1}{2}\sum_{j=1}^{p} \frac{|z_j|^2}{\mathrm{Im}(z_j)} \\
& =
\frac{1}{4} \sum_{j=1}^p \left( \frac{1}{a_{j-1}^2 \mathrm{Im}(z_j)} + \frac{|z_j|^2}{\mathrm{Im}(z_j)} \right) \\
& \geq
\frac{1}{4A^2} \sum_{j=1}^{p} \frac{1+|z_j|^2}{\mathrm{Im}(z_j)} \\
& =
\frac{1}{4A^2} \sum_{j=1}^{p} \|M_j\|_2^2.
\end{align*}
Thus, the conclusion of the Theorem follows from Theorem~\ref{t.periodic.ids}.
\end{proof}

With this fact in hand, the desired estimate on the bands is easy.

\begin{proof}[Proof of Theorem~\ref{l.avila.lp1}]
Let $B$ denote a band of $\sigma(J)$. Using \eqref{eq:per:bandmeas} and Theorem~\ref{t:per:ids:hilbschmidt}, one has
$$
\frac{1}{p}
=
\int_{B} \! dk(E)
\geq
\int_{B} \left( \frac{1}{4 \pi A^2 p} \sum_{j=1}^p \|M_j\|_2^2 \right) dE
\geq
\frac{|B|}{2 \pi A^2},
$$
where we have used the bound $\|M\|^2_2 \geq 2$ which holds for any $M \in \SL(2,\R)$ (by Cauchy-Schwarz) in the final inequality. The theorem follows.
\end{proof}

\end{appendix}

\end{document}